\newcommand{\1}{\mathbf {1}}
\DeclareMathOperator{\tr}{tr}
\DeclareMathOperator{\Conj}{Conj}
\newtheorem{theorem}{Theorem}[section]
\newtheorem{proposition}[theorem]{Proposition}
\newtheorem{lemma}[theorem]{Lemma}
\newtheorem{corollary}[theorem]{Corollary}
\theoremstyle{definition}
\newtheorem{conjecture}[theorem]{Conjecture}
\theoremstyle{remark}
\newtheorem{remark}[theorem]{Remark}
\newtheorem{example}[theorem]{Example}
\newcommand{\w}{\omega}
\DeclareMathOperator{\Ker}{Ker\,}
\DeclareMathOperator{\Irr}{Irr\,}
\newcommand\Z{\mathbb{Z}}
\newcommand\Zpos{\Z_{\geq0}}
\newcommand\Zplus{\Z_{>0}}
\newcommand\C{\mathbb{C}}
\newcommand\N{\mathbb{N}}
\newcommand{\CB}{{\mathcal B}}
\newcommand{\FX}{{ X}}
\newcommand{\FK }{{ K}}
\newcommand{\FD}{{ D}}
\newcommand{\FW}{{ W}}
\newcommand{\Dih}{{\rm D}}
\newcommand{\Qu}{{\rm Q}}
\newcommand{\SD}{{\rm SD}}
\newcommand{\NO}{\,{\raise0.25em\hbox{$\mathop{\hphantom {\cdot}}\limits^{_{\circ}}_{^{\circ}}$}}\,}
\newcommand\ol{\overline}
\numberwithin{equation}{section}
\begin{document}
\title{Harada\rq{}s conjecture II and Gramian determinants}

\author
{Toshiyuki Abe\thanks{abe.toshiyuki.mz@ehime-u.ac.jp, Faculty of Education, Ehime University,
Matsuyama, Ehime 790-8577, Japan}
}
\date{ }
%


\maketitle


\abstract{
Let $G$ be a finite group. 
Harada's conjecture II states that the ratio of the product of all the number of elements in conjugacy classes over that of all degrees of irreducible complex characters of $G$ is an integer.
The ratio is called Harada's number. 
In this article, we discuss the Harada\rq{}s number from the view point of Gramian determinants for suitable inner spaces and introduce an invariants generalizing the square of Harada's number associated to central characters of $G$. 
We also give a necessary and sufficient condition so that  Harada's conjecture II holds.
We calculate explicitly Harada\rq{}s numbers in some examples by using the method given in this article.     
}

\section{Introduction}
This is an article for the aim of the affirmative solution to Harada's conjecture II.  
The conjecture was presented in \cite{Harada} by Harada (It was also mentioned to Harada\rq{}s conjecture I in \cite{Harada}).
The conjecture relates between the sizes of conjugacy classes and the degrees of complex irreducible characters, and states that Harada\rq{}s number of a finite group is always an integer. 
This conjecture has been studied and verified for some concrete models or series of groups (see \cite{Harada}, \cite{Hida}, \cite{Kiyota}, \cite{Sugimoto}  and \cite{Miyamoto}). 
In this article, we consider Gramian determinants on a subspace of the center of the group algebras with suitable basis and give a necessary and sufficient condition to Harada\rq{}s conjecture II. 
By using the technique, we give alternative calculations of Harada\rq{}s numbers explicitly for some known example.    

We shall give a statement of Harada\rq{}s conjecture II. 
Let $G$ be a finite group, and consider the set $\{K_1,\cdots, K_s\}$ of all conjugacy classes and the set $\{\chi_1,\cdots, \chi_s\}$ of all complex irreducible characters.
{\em Harada\rq{}s number} is the ratio 
\[
h(G):=\frac{|K_1|\cdots |K_s|}{\chi_1(1)\cdots \chi_s(1)},
\]
where $1$ denotes the unit of a group. 
\begin{conjecture}(Harada\rq{}s conjecture II)
For any finite group $G$, $h(G)\in \Z$.
\end{conjecture}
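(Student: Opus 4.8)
The plan is to interpret $h(G)$ through determinants attached to the center $Z(\C[G])$ of the group algebra, equipped with a symmetrizing form, and then to reduce the integrality of $h(G)$ to an algebraic-integrality statement organized, central character by central character, as a product of Gramian determinants. First I fix the two standard bases of $Z(\C[G])$: the class sums $C_i=\sum_{g\in K_i}g$ and the primitive central idempotents $e_1,\dots,e_s$ attached to $\chi_1,\dots,\chi_s$. On $\C[G]$ I use the nondegenerate symmetric form $B(x,y)=\tau(xy)$, where $\tau\bigl(\sum_g a_g\,g\bigr)=a_1$ reads off the coefficient of the identity; its restriction to $Z(\C[G])$ stays nondegenerate. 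Writing $i\mapsto i'$ for the involution with $K_{i'}=K_i^{-1}$, a direct computation gives
\[
B(C_i,C_k)=|K_i|\,\delta_{k,i'},\qquad B(e_j,e_l)=\delta_{jl}\,\frac{\chi_j(1)^2}{|G|},
\]
so that $\det\bigl(B(C_i,C_k)\bigr)=\varepsilon\prod_i|K_i|$ with $\varepsilon=\pm1$ the sign of the class-inversion permutation, while $\det\bigl(B(e_j,e_l)\bigr)=\prod_j\chi_j(1)^2/|G|^s$.

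Second, I record the change of basis $C_i=\sum_j\omega_j(C_i)\,e_j$, where $\omega_j(C_i)=|K_i|\chi_j(g_i)/\chi_j(1)$ is the central character value and $g_i\in K_i$; factoring the matrix $M=\bigl(\omega_j(C_i)\bigr)_{i,j}$ as $M=\mathrm{diag}(|K_i|)\,X\,\mathrm{diag}(1/\chi_j(1))$ with $X=\bigl(\chi_j(g_i)\bigr)$ the character table yields $\det M=h(G)\,\det X$. The entries of $M$ are algebraic integers, so $\det M$ lies in the ring $\mathcal{O}$ of algebraic integers, and the base-change identity $\det\bigl(B(C_\bullet,C_\bullet)\bigr)=(\det M)^2\,\det\bigl(B(e_\bullet,e_\bullet)\bigr)$ gives
\[
(\det M)^2=\varepsilon\,\frac{|G|^s\prod_i|K_i|}{\prod_j\chi_j(1)^2}.
\]
The right-hand side is rational and equals $(\det M)^2$ with $\det M\in\mathcal{O}$, hence it is a rational integer; in particular $\prod_j\chi_j(1)^2$ divides $|G|^s\prod_i|K_i|$. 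Eliminating $\prod_j\chi_j(1)$ through $h(G)\prod_j\chi_j(1)=\prod_i|K_i|$ rewrites this as
\[
h(G)^2=\varepsilon\,\frac{(\det M)^2\,\prod_i|K_i|}{|G|^s},
\]
which exhibits $h(G)^2$ as a rational number whose denominator divides $|G|^s$. Since $h(G)\in\Q$ and $\Z$ is integrally closed in $\Q$, it now suffices to prove $h(G)^2\in\Z$, that is, that $|G|^s$ divides $(\det M)^2\prod_i|K_i|$.

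Third, to attack this divisibility I would localize at the central characters of $G$, namely the linear characters $\lambda\in\Irr(Z(G))$, each of which cuts out a block $\{e_j:\lambda_j=\lambda\}$ and hence an orthogonal summand of $Z(\C[G])$ for the form $B$. The Gramian determinant of each summand is a quadratic invariant refining $h(G)^2$, and because $B$ is block diagonal in the idempotent basis these block invariants multiply back to the global identity for $h(G)^2$ above. The content of Harada's conjecture then becomes the simultaneous $p$-adic divisibility statement that, for every prime $p$, the valuations contributed by the block Gramians account for the full power $v_p(|G|^s)$; this is the necessary and sufficient criterion announced in the introduction, and it is checkable in closed form for a single block.

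The main obstacle is precisely this last, uniform divisibility. Every determinant in play is an algebraic integer, but $h(G)$ carries $\prod_j\chi_j(1)$ in its denominator, and nothing in the Gramian formalism by itself forces each prime dividing $\prod_j\chi_j(1)$ to divide $\prod_i|K_i|$ to at least the same multiplicity. A case-free argument would require controlling the $p$-adic valuations of the block Gramians simultaneously over all primes and all finite groups, and no such control is presently available; accordingly the feasible target is the equivalent determinant criterion, together with its verification on the explicit families and examples carried out in the later sections.
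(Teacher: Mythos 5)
The statement you were asked to prove is a \emph{conjecture}: the paper itself offers no proof of it, only a reformulation (Theorem \ref{thm3-1}: $\gamma(G)=h(G)^2c(G)$, so $h(G)\in\Z$ iff $c(G)$ divides $\gamma(G)$), a refinement over central characters (Theorem \ref{thm3-7}), and verifications for particular families in Section 4. Your proposal likewise does not prove it, and to your credit you say so explicitly in the final paragraph. What you have actually written is a reconstruction of the paper's reduction in a slightly different normalization: you use the coefficient-of-identity symmetrizing form $\tau$ on $Z(\C G)$, for which the class-sum Gramian is $\pm\prod_i|K_i|$ and the idempotent Gramian is $\prod_j\chi_j(1)^2/|G|^s$, whereas the paper uses the regular trace form $\langle u|v\rangle=\tr_{Z(\C G)}\ol{u}v$, for which the class-sum Gramian is the manifestly integral matrix ${}^t\ol{\FW}\FW$. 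Both routes land on the same identity $h(G)^2c(G)=|G|^s\prod_i|K_i|/\prod_j\chi_j(1)^2\in\Z$ and the same criterion; your derivation of integrality via $\det M\in\mathcal{O}$ together with rationality is a legitimate alternative to the paper's structure-constant argument. Your third step (blocks cut out by characters of the center, with the global Gramian factoring as a product of block Gramians) is exactly the content of the paper's $\mu_\phi(G)$ and Theorem \ref{thm3-7}.

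The genuine gap is the one you name yourself, and it must be stated plainly: the ``uniform $p$-adic divisibility'' is not a missing lemma in an otherwise complete argument --- it \emph{is} the conjecture. Reducing $h(G)\in\Z$ to $h(G)^2\in\Z$ and then to a determinant divisibility leaves the entire difficulty intact. Moreover, the per-block accounting you propose as the ``feasible target'' cannot be carried out block by block: the paper exhibits examples (e.g.\ $\mu_\phi(\Dih_{8q})=\tfrac14$ for the faithful central character $\phi$) where an individual block invariant $\mu_\phi(G)$ fails to be an integer, so the required valuations are only recovered after cancellation across blocks. Your proposal is therefore a correct account of the known reduction, not a proof of the statement.
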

In the case $G$ is abelian, $h(G)=1$ since $|K_i|=\chi_i(1)=1$ for all $i$. 
In the non abelian case, calculations of the numbers elements in conjugacy classes and degrees of irreducible characters are sufficient and hence the character table 
\[
X=(\chi_i(g_j))_{i,j=1,\cdots, s},
\]
of $G$ gives enough information to determine $h(G)$, where $g_i$ is a representative of $K_i$ for each $i$. 
In particular, it is verified that every finite simple group $G$ in \cite{ATLAS} satisfies $h(G)\in\Z$ by Chigira (as announced in \cite{Harada}). 
It is also proved that $h(G)$ is integer if $G$ is the symmetric groups and the alternating groups by Hida (\cite{Hida}) by using the well known correspondence of irreducible modules over $\C$ and conjugacy classes via Young diagrams. 
In another direction, Kiyota proved that $h(G)\in \Z$ for finite groups $G$ whose every Sylow subgroup is abelian (see \cite{Kiyota}) in help of  Brauer\rq{}s height $0$ conjecture. 
Recently, it has been proved for a some series of finite groups of Lie type (\cite{Sugimoto}) and some groups of nilpotent class $2$ (\cite{Miyamoto}).   

%
%

One of the main result of this article is that Harada\rq{}s number of $G$ can be expressed by means of the structure of the center $Z(\C G)$ of the the usual group algebra $\C G$. 
We write $[g]$ for the vector in $\C G$ corresponding to $g\in G$, and for any subset $U\subset G$, write $[U]=\sum_{x\in U}[x]$. 
Then the ordered set of the class sums $[K_1],\cdots, [K_s]$ forms a basis of $Z(\C G)$. 
It is also well known that 
\[
e_{\chi_i}=\frac{\chi_i(1)}{|G|}\sum_{j=1}^{s}\chi_i(g_j^{-1})[K_j]
\] 
for $i=1,\cdots, s$ are primitive idempotents and that they also forms a basis of $Z(\C G)$. 
For any $i,j$, $[K_j]e_{\chi_i}=\omega_{ij} e_{\chi_i}$ with constant   
\[
\omega_{ij}=\frac{\chi_i(g_j)|K_j|}{\chi_i(1)}.
\]
The determinant of the matrix $W=(\omega_{ij})_{i,j=1,\cdots,s}$ is $\det W=h(G) \det X$ as pointed out by Harada in \cite{Harada}. 

We note that $|\det W|^2=\det({}^t \ol{W} W)=h(G)^2 c(G)$, where 
\[
c(G)=\det({}^t\ol{X} X)=|C_G(g_1)|\cdots|C_G(g_s)|.
\]
Hence $h(G)$ appears in the absolute value of a Hermitian matrix ${}^t \ol{W} W$, that is  Gramian matrix of a certain Hermitian space over $\C$. 
As a desired Hermitian space, we can choose the algebra $Z(\C G)$. 
We see that $Z(\C G)$ becomes a Hermitian space equipped with a Hermitian form defined by  
\[
\langle u|v\rangle :=\tr_{Z(\C G)} \overline{u}v 
\]
for $u, v\in Z( \C G)$, where $\tr_{Z(\C G)} u$ is the trace of the linear map given by  $w\mapsto u w$ for $w\in Z(\C G)$ and the map $\overline{\cdot}$ is a skew-linear map which $[g]$ sends to $[g^{-1}]$ for $g\in G$ 
For any subset $\CB$ of $Z(\C G)$, we consider the absolute value $\gamma(\CB)$ of the determinant of the Gramian matrix $(\langle u|v\rangle)_{u,v\in \CB}$;  
\[
\gamma(\CB)=|\det((\langle u|v\rangle)_{u,v\in \CB})|.   
\]
Then we can show that   
\[
\gamma(G):=\gamma(\{[K_1],\cdots, [K_s]\})\in \Z
\]  
is nonzero and admits with $\det({}^t\ol{W} W)$. 
Since $\gamma(G)$ and $c(G)$ are integers, we have the following criterion.
\begin{theorem} {\rm (Theorem \ref{thm3-1})}
Let $G$ be a finite group.
Then $\gamma(G)=h(G)^2c(G)$. 
Therefore, $h(G)\in \Z$ if and only if $c(G)$ divides $\gamma(G)$.  
\end{theorem}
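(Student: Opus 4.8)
The plan is to exploit the fact that $Z(\C G)$ carries two natural bases---the class sums $[K_1],\dots,[K_s]$ and the primitive idempotents $e_{\chi_1},\dots,e_{\chi_s}$---related by the matrix $W=(\omega_{ij})$, and to compute the Gramian of the Hermitian form $\langle\,\cdot\,|\,\cdot\,\rangle$ in whichever basis is most transparent. Since the form transforms by ${}^t\overline{W}\,\Gamma\,W$ under a change of basis, its determinant in the class-sum basis will differ from its determinant in the idempotent basis by the factor $|\det W|^2$, and the excerpt already records $|\det W|^2=h(G)^2 c(G)$. Thus the whole identity reduces to showing that the Gramian is essentially trivial in the idempotent basis, after which I must separately verify that $\gamma(G)$ is an integer so that the divisibility criterion is meaningful.

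First I would compute $\langle e_{\chi_i}|e_{\chi_j}\rangle$. The key observation is that the conjugation $\overline{\,\cdot\,}$, sending $[g]$ to $[g^{-1}]$, permutes the idempotents: a direct calculation from the defining formula for $e_{\chi_i}$, using $\chi_i(g^{-1})=\overline{\chi_i(g)}$ and reindexing over inverse classes, gives $\overline{e_{\chi_i}}=e_{\overline{\chi_i}}$, where $\overline{\chi_i}$ is the complex-conjugate character. Because the $e_{\chi_i}$ are orthogonal idempotents and $\tr_{Z(\C G)}e_{\chi_j}=1$ (each is the projection onto a one-dimensional block), one gets $\langle e_{\chi_i}|e_{\chi_j}\rangle=\tr_{Z(\C G)}\bigl(e_{\overline{\chi_i}}e_{\chi_j}\bigr)=\delta_{\overline{\chi_i},\chi_j}$. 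Hence in the idempotent basis the Gramian is the permutation matrix of the involution induced by complex conjugation on irreducible characters, so its determinant is $\pm1$ and its absolute value is $1$.

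Next I would transport this to the class-sum basis. From $[K_j]e_{\chi_i}=\omega_{ij}e_{\chi_i}$ and $\sum_i e_{\chi_i}=1$ one has $[K_j]=\sum_i\omega_{ij}e_{\chi_i}$, so $W$ is exactly the change-of-basis matrix; sesquilinearity of $\langle\,\cdot\,|\,\cdot\,\rangle$ then yields $\bigl(\langle[K_i]|[K_j]\rangle\bigr)={}^t\overline{W}\,\bigl(\langle e_{\chi_a}|e_{\chi_b}\rangle\bigr)\,W$, and taking absolute determinants gives $\gamma(G)=|\det W|^2\cdot 1=h(G)^2 c(G)$, which is nonzero since $h(G),c(G)>0$. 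To see $\gamma(G)\in\Z$ I would argue directly that the Gramian has integer entries: each entry $\langle[K_i]|[K_j]\rangle=\tr_{Z(\C G)}\bigl([K_i^{-1}][K_j]\bigr)$ is the trace of multiplication by a product of class sums, which is a $\Zpos$-combination of class sums via the non-negative integer structure constants $a_{\ell n}^m$ of $Z(\C G)$, and the trace of multiplication by any $[K_\ell]$ in the class-sum basis is the integer $\sum_n a_{\ell n}^n$; hence the determinant is an integer.

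Finally, the equivalence follows formally: $h(G)$ is a positive rational, so if $h(G)\in\Z$ then $\gamma(G)=h(G)^2 c(G)$ is divisible by $c(G)$, while if $c(G)\mid\gamma(G)$ then $h(G)^2=\gamma(G)/c(G)\in\Z$, and a rational whose square is an integer is itself an integer, giving $h(G)\in\Z$. I expect the main obstacle to be the second paragraph: correctly pinning down how $\overline{\,\cdot\,}$ acts on the idempotents and confirming that the induced Gramian is a genuine permutation matrix---rather than a permutation matrix rescaled by character-dependent constants---is where the argument could most easily go wrong, and it is precisely this normalization, together with the integrality via structure constants, that produces the clean identity $\gamma(G)=h(G)^2 c(G)$.
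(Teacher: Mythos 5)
Your proposal is correct in substance and follows essentially the same route as the paper: both arguments rest on the eigenvalue relation $[K_j]e_{\chi_i}=\omega_{\chi_i}(g_j)e_{\chi_i}$ to identify the class-sum Gramian with ${}^t\overline{\FW}\,\FW$, and both obtain integrality of $\gamma(G)$ from the non-negative integer structure constants of $Z(\C G)$. The only difference is organizational: you compute the Gramian in the idempotent basis and transport it by $\FW$, whereas the paper evaluates $\langle[K_i]|[K_j]\rangle$ directly by decomposing $\overline{[K_i]}\,[K_j]$ against the idempotents. Your explicit remark that a rational with integer square is an integer is a step the paper leaves implicit, and is worth keeping.

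One intermediate claim is wrong, though harmlessly so: $\overline{e_{\chi_i}}$ is not $e_{\overline{\chi_i}}$ but $e_{\chi_i}$ itself. From $e_{\chi}=\frac{\chi(1)}{|G|}\sum_{g}\overline{\chi(g)}[g]$ one gets
\[
\overline{e_{\chi}}=\frac{\chi(1)}{|G|}\sum_{g}\chi(g)[g^{-1}]=\frac{\chi(1)}{|G|}\sum_{h}\chi(h^{-1})[h]=\frac{\chi(1)}{|G|}\sum_{h}\overline{\chi(h)}[h]=e_{\chi},
\]
since the conjugation of the coefficients and the inversion of the group elements cancel rather than compound. Consequently the Gramian in the idempotent basis is exactly the identity matrix (each $\langle e_{\chi_i}|e_{\chi_j}\rangle=\delta_{ij}\tr_{Z(\C G)}e_{\chi_i}=\delta_{ij}$), not merely a permutation matrix; your conclusion that its determinant has absolute value $1$ survives, and the remainder of the argument is unaffected.
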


This theorem implies that Harada\rq{}s conjecture II can be verified by using only the group structure of $G$, not using the classification of irreducible characters. 
We consider a number $\mu(G):=h(G)^2=\gamma(G)/c(G)$ for a finite group $G$.
We generalize the number $\mu(G)$ to $\mu_{\phi}(G)$ for any irreducible character $\phi\in \Irr(Z)$ of a central subgroup $Z$ of $G$. 
For any $\phi\in \Irr(Z)$, we consider an idempotent $e_\phi=\frac{1}{|Z|}\sum_{z\in Z}\phi(z^{-1})[z]\in Z(\C G)$.
Then we take a suitable basis $\CB(G)_\phi\subset \{e_\phi[K_1],\cdots, e_\phi[K_s]\}$ of $e_\phi Z(\C G)$ and define $\gamma_\phi(G): =\gamma(\CB(G)_\phi)$.
We also define $c_\phi(G)$ as a generalization of $c(G)$. 
Then we define $\mu_\phi(G)=\gamma_\phi(G)/c_\phi(G)$. 
The number $\mu_\phi(G)$ admits $\mu(G)$ in the case $\phi=\1$ is the trivial character of $Z=\{1\}$. 
We also prove that 
\[
\mu(G)=\prod_{\phi\in \Irr(Z)}\mu_\phi(G)
\] 
for any central subgroup $Z$ of $G$ as in Theorem \ref{thm3-7}.  
Unfortunately, we have examples where $\mu_\phi(G)$ is not an integer for some faithful $\phi\in \Irr(Z)$.
We also consider the case $\Ker\phi=N$ is not $\{1\}$.
In this case, $\phi$ induces a faithful character $\ol{\phi}$ of $Z/N$ and we show that $\mu_\phi(G)=\kappa_N(G)^2\mu_{\ol{\phi}}(G/N)$ for some integer $\kappa_N(G)$ defined for the subgroup $N$ of $Z$. 

One of the aim of this article is to demonstrate calculations of $\gamma_\phi(G)$ and $\mu_\phi(G)$ for some finite groups $G$. 
In fact, we do in the case $G$ is an abelian group, a dihedral group, a semi-dihedral group, a quaternion group, a $p$-group which contains a maximal cyclic subgroup for $p>2$, and central products of finite groups.     

This article is organized as follows. 
In Section 2.1, we give a preliminary of Hermitian spaces and Gramian determinants. 
In Section 2.2, we state Harada's conjecture II. 
In Section 2.3, we give a criterion to Harada's conjecture.  
Section 3 is devoted to a generalization of the square of Harada's number for central characters.  
In Section 3.1, we generalize $\mu(G)$ to $\mu_\phi(G)$ for a characters of a central subgroup. 
In Section 3.2, we show the factorization property of $\mu(G)$. 
In Section 3.3, we consider the relation of $\mu_\phi(G)$ and $\mu_{\ol{\phi}})(G/\ker\phi)$ where $\ol(\phi)$ is the faithful character induced from $\phi$. 
We give some examples in Section 4.   

\paragraph{Acknowledgment:} 
The author thanks Naoki Chigira for stimulating discussions and comments. 
He also express a gratitude to Masao Kiyota and Akihiro Hida for the discussions. 
This work is partially supported by JSPS fellow 19K03403.  


\section{Preliminaries and Harada\rq{}s conjecture II}
In this paper, we only consider finite groups. 
The units of groups are denoted by $1$. 
In a group $G$, we  use the notations 
\[
{}^xy:=xyx^{-1}, \quad y^{x}=x^{-1}yx
\] 
for elements $x,y\in G$. 
For simplicity, when $H$ is a subgroup of a group $G$, we write $H\leq G$.   
We denote by $G\rq{}$ and $Z(G)$ the commutator subgroup and the center of $G$, respectively. 
We write for $\C G$ the group algebra of $G$ over a complex number field $\C$. 
We write $|U|$ for the number of elements in a finite set $U$. 
\subsection{Some notations with respect to a Hermitian space}
Let $A$ be a finite dimensional semi-simple associative algebra over $\C$ equipped with a skew-linear algebra homomorphism $\overline{\cdot}:A\rightarrow A$, that is, a ring endomorphism satisfying $\overline{\lambda a}=\overline{\lambda}\overline{a}$ for $a\in A$ and $\lambda\in \C$.
Let $Z(A)$ be the center of $A$, that is, 
\[
Z(A)=\{u\in A|a u=u a\text{ for }a\in A\}. 
\]
Then we have a Hermitian form on $Z(A)$ defined by 
\[
\langle u|v\rangle:=\tr_{Z(A)} \overline{u} v,\quad u,v\in Z(A), 
\]
where $\tr_{Z(A)} u$ is the trace of the linear map given as the multiplication of $u$ on $Z(A)$. 
We note that $\langle\cdot |\cdot\rangle$ is non-degenerate. 

For an ordered set $\CB=\{v_1,\cdots, v_d\}$ of vectors in $Z(A)$, we consider Gramian matrix  
\[
\Gamma(\CB)=(\langle v_i|v_j\rangle)_{i,j=1,\cdots, d}
\]   
and denote by $\gamma(\CB)$ the absolute value of the determinant of Gramian matrix $\Gamma(\CB)$: 
\[
\gamma(\CB)=|\det(\Gamma(\CB))|. 
\] 
It is clear that $\gamma(\CB)$ does not depend on the order of vectors in $\CB$ and that $\CB$ is linearly independent if and only if $\gamma(\CB)$ is nonzero.

For a linearly independent ordered subset $\CB_1=\{v_i|i=1,\cdots, d\}$ of $Z(A)$ and an ordered subset $\CB_2=\{u_i|i=1,\cdots, d\}$ of the subspace $\langle \CB_1\rangle_{\C}$ spanned by $\CB_1$, we have a square matrix $T=T_{\CB_1,\CB_2}=(t_{ij})$ subject to 
\[
u_i=\sum_{j=1}^{d}t_{ji}v_j,\quad i=1,\cdots, d.   
\] 
We call the absolute value of the determinant of $T$ the {\em index} of   $\CB_2$ for $\CB_1$ and denote by $[\CB_2:\CB_1]$:
\[
[\CB_2:\CB_1]=|\det(T)|. 
\] 
The index does not depend on the order of vectors in $\CB_1$ and $\CB_2$. 
In the case $\CB_2$ is linearly independent, $\CB_2$ is a basis of $\langle \CB_1\rangle_\C$ and $[\CB_2:\CB_1]\neq 0$. 
Thus $\CB_1$ is a basis of $\langle \CB_2\rangle_{\C}$ and $[\CB_1:\CB_2]=[\CB_2:\CB_1]^{-1}$.  
Since $\Gamma(\CB_2)={}^t\overline{T}\,\Gamma(\CB_1)\,T$, we have 
\begin{align}\label{eqn3-2}
\gamma(\CB_2)=[\CB_2:\CB_1]^2\gamma(\CB_1). 
\end{align}

\subsection{Harada\rq{}s conjecture II and a criterion for the conjecture}
Let $G$ be a finite group, $\Irr(G)=\{\chi_1,\cdots, \chi_s\}$ the set of all irreducible characters of $G$ valued in $\C$ and $\Conj(G)=\{K_1,\cdots, K_s\}$ that of all conjugacy classes of $G$. 
The rational number 
\[
h(G):=\frac{\prod_{i=1}^{s}|K_i|}{\prod_{i=1}^{s}\chi_i(1)}
\] 
is called {\em Harada\rq{}s number} of $G$. 
\begin{conjecture}{\rm (Harada's conjecture II)} {\rm (\cite{Harada})}
Let $G$ be a finite group. 
Then $h(G)\in \Z$. 
\end{conjecture}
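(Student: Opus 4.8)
The statement is the still-open Harada conjecture, so rather than attack the ratio $h(G)$ by combinatorial estimates I would follow the route signalled in the introduction: recast $h(G)^2$ as a quotient of two determinants that are \emph{visibly} integers, and then reduce integrality of $h(G)$ to a single divisibility. The plan rests on the identity $\det W = h(G)\det X$, where $W=(\omega_{ij})$ and $X=(\chi_i(g_j))$. This is immediate once one writes $W=\mathrm{diag}(\chi_i(1)^{-1})\,X\,\mathrm{diag}(|K_j|)$ and takes determinants, the two diagonal factors contributing exactly $\prod_j|K_j|$ and $\prod_i\chi_i(1)^{-1}$.

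Next I would identify the Gramian matrix of the class-sum basis with ${}^t\ol{W}\,W$. Since the class sums are simultaneously diagonalised by the idempotents $e_{\chi_i}$ with eigenvalues $\omega_{ij}$, and since $\ol{[K_i]}=[K_i^{-1}]$ replaces $\omega_{ki}$ by $\omega_{k,i^{*}}=\ol{\omega_{ki}}$ (because $|K_i|$ and $\chi_k(1)$ are real and $\chi_k(g_i^{-1})=\ol{\chi_k(g_i)}$), the entry $\langle[K_i]\,|\,[K_j]\rangle=\tr_{Z(\C G)}([K_i^{-1}][K_j])$ equals $\sum_k \ol{\omega_{ki}}\,\omega_{kj}=({}^t\ol{W}\,W)_{ij}$. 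Taking determinants then yields $\gamma(G)=|\det W|^2=h(G)^2|\det X|^2=h(G)^2 c(G)$, with $c(G)=\det({}^t\ol{X}\,X)=\prod_i|C_G(g_i)|$ by column orthogonality; this is Theorem \ref{thm3-1}.

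The decisive gain is that $\gamma(G)$ can be seen to be an integer \emph{without any reference to characters}: multiplication by each class sum is represented in the class-sum basis by the matrix of class multiplication coefficients $a_{ijk}\in\Zpos$ defined by $[K_i][K_j]=\sum_k a_{ijk}[K_k]$, so each $\langle[K_i]\,|\,[K_j]\rangle$ is the trace of a product of integer matrices, hence an integer, and $\gamma(G)=|\det\Gamma(\CB)|\in\Z$. As $c(G)=\prod_i|C_G(g_i)|$ is plainly an integer and $h(G)\in\Q$, the relation $h(G)^2=\gamma(G)/c(G)$ together with the integral-closedness of $\Z$ in $\Q$ shows that $h(G)\in\Z$ if and only if $h(G)^2\in\Z$, i.e. if and only if $c(G)\mid\gamma(G)$.

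The hard part, and the reason the conjecture remains open, is precisely this divisibility $c(G)\mid\gamma(G)$ for \emph{arbitrary} $G$: the determinant criterion converts the question into a purely local one about the integer Gramian $\Gamma(\CB)$, but it does not by itself produce the factor $\prod_i|C_G(g_i)|$. My plan for making progress would be to exploit the multiplicativity $\mu(G)=\prod_{\phi\in\Irr(Z)}\mu_\phi(G)$ over a central subgroup $Z$ (Theorem \ref{thm3-7}) to strip off abelian layers and argue prime by prime on each factor, using the reduction $\mu_\phi(G)=\kappa_N(G)^2\,\mu_{\ol{\phi}}(G/N)$ to pass from a character of $Z$ to a faithful character of a quotient. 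The delicate point, already flagged in the introduction, is that the individual factors $\mu_\phi(G)$ need \emph{not} be integers for faithful $\phi$, so the required divisibility can only emerge after recombining the factors; controlling these denominators globally is where I expect the genuine obstacle to lie.
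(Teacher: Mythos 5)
Your proposal does not (and honestly does not claim to) prove the statement: it is Harada's conjecture II itself, which the paper also leaves open, offering only the equivalent criterion $h(G)\in\Z \Leftrightarrow c(G)\mid\gamma(G)$ of Theorem \ref{thm3-1} and the factorization machinery of Theorem \ref{thm3-7} and Proposition \ref{prop4-12}. Your reduction — identifying $\Gamma(\CB_{\rm conj}(G))$ with ${}^t\ol{\FW}\FW$, proving integrality of $\gamma(G)$ via the class multiplication constants, and proposing to attack the divisibility through $\mu_\phi(G)$ over central subgroups — reproduces the paper's approach essentially verbatim, including the correct caveat that the individual $\mu_\phi(G)$ can fail to be integers, so no gap beyond the one inherent in the open problem.
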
  
It is also known a stronger conjecture. 
\begin{conjecture}{\rm (Harada-Chigira's conjecture)} 
Let $G$ be a finite group and $G\rq{}$ its commutator subgroup.  
Then $
h(G)/|G\rq{}|\in \Z$. 
\end{conjecture}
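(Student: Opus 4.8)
The plan is to build on the integrality machinery of Theorem~\ref{thm3-1} and to read the Harada--Chigira bound as a refinement of Harada's conjecture II. Granting the weaker statement $h(G)\in\Z$ (the main target of this paper), the relation $\gamma(G)=h(G)^{2}c(G)$ with $\gamma(G),c(G)\in\Z$ reduces the problem to the divisibility $|G'|\mid h(G)$, which, since $h(G)^{2}=\gamma(G)/c(G)$ and squaring doubles every $p$-adic valuation, is equivalent to $|G'|^{2}c(G)\mid\gamma(G)$. First I would strip away the contributions that are visibly equal to $1$: the $|Z(G)|$ central classes are singletons in $\prod_{i}|K_{i}|$, and the $|G:G'|$ linear characters have degree $1$ in $\prod_{i}\chi_{i}(1)$, so that
\[
h(G)=\frac{\prod_{K_{i}\not\subset Z(G)}|K_{i}|}{\prod_{\chi_{i}(1)>1}\chi_{i}(1)},
\]
and the real question becomes where a factor $|G'|$ hides in this reduced ratio.

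Second, I would attempt to produce the $|G'|$-part through the central-subgroup calculus developed here. For $Z\le Z(G)$ the factorization $\mu(G)=\prod_{\phi\in\Irr(Z)}\mu_{\phi}(G)$ (Theorem~\ref{thm3-7}), combined with $\mu_{\phi}(G)=\kappa_{N}(G)^{2}\mu_{\ol{\phi}}(G/N)$ for $N=\Ker\phi$, compares $\mu(G)=h(G)^{2}$ with the smaller quotients $G/N$. Because $G/G'$ is abelian and hence $h(G/G')=1$, the hope is that iterating these reductions along a chain of central sections and recording the integers $\kappa_{N}(G)$ accumulates precisely the square of $|G'|$. I would calibrate this guess against the worked examples of Section 4, where $\gamma_{\phi}(G)$, $c_{\phi}(G)$ and $\kappa_{N}(G)$ are available in closed form, checking prime by prime whether the $p$-part of $|G'|$ emerges with the multiplicity the conjecture demands.

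The hard part---and the reason this is only conjectured, not a corollary of the present framework---is that $G'$ need not be central, whereas the Hermitian form $\langle\cdot|\cdot\rangle$ on $Z(\C G)$ and the product $\mu(G)=\prod_{\phi}\mu_{\phi}(G)$ only ever resolve central subgroups; they cannot by themselves detect the whole commutator subgroup. Thus the Gramian reduction is tailored to the integrality assertion $h(G)\in\Z$, while the divisibility by $|G'|$ appears to need new input: for each prime $p\mid|G'|$, a comparison of the $p$-adic valuations of $\prod_{i}|K_{i}|$ and $\prod_{i}\chi_{i}(1)$ that is bounded below by the $p$-part of $G'$. Establishing that valuation inequality---plausibly by block- or Brauer-theoretic means, in the spirit of Kiyota's appeal to the height-zero conjecture---is where I expect the genuine difficulty to sit, with the Hermitian-space picture contributing mainly the clean target $|G'|^{2}c(G)\mid\gamma(G)$.
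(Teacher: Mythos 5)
The statement you were asked about is not a theorem of this paper: it is the Harada--Chigira conjecture, recorded in Section 2.2 as an open strengthening of Harada's conjecture II, and the paper offers no proof of it (only case-by-case verifications in Section 4, e.g.\ for dihedral, generalized quaternion and semi-dihedral groups, where in fact $h(G)=|G'|$ or $|G'|^2$, and a closure property under central products). Your proposal, accordingly, does not prove the statement either, and to your credit you say so explicitly. As a submitted proof it therefore has a gap coextensive with the open problem itself: nothing beyond the trivial reformulation $|G'|\mid h(G)\iff |G'|^2c(G)\mid\gamma(G)$ (valid once $h(G)\in\Z$ is granted) is actually established, and even that reformulation presupposes Harada's conjecture II, which is also unproved in general.

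Your diagnosis of \emph{why} the paper's machinery falls short is essentially correct and matches what the paper itself implicitly concedes: the Gramian/central-idempotent calculus only refines $\mu(G)=h(G)^2$ along subgroups $Z\le Z(G)$, via $\mu(G)=\prod_{\phi\in\Irr(Z)}\mu_\phi(G)$ and $\mu_\phi(G)=\kappa_{\Ker\phi}(G)^2\mu_{\ol{\phi}}(G/\Ker\phi)$, so it can only ever extract the part of $G'$ that meets the center (which is why the worked examples are all nilpotent of class $2$ or close to it, where $G'\le Z(G)$). Two smaller cautions: the integers $\kappa_N(G)$ have no a priori relation to $|G'|$ in general, so the hope that iterating central reductions ``accumulates precisely $|G'|^2$'' is a heuristic, not an argument; and the paper's Section 4 furnishes a warning that the individual factors $\mu_\phi(G)$ need not be integers (e.g.\ $\mu_\phi(\Dih_{8q})=\tfrac14$), so any valuation-counting strategy must work with the full product over $\Irr(Z)$ rather than termwise. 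If you intend to pursue this, the honest conclusion of your own third paragraph is the right one: new input beyond this paper is required.
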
   

We fix a representative $g_i\in K_i$ for each $i$.
Then the matrix
\[
{\FX}=(\chi_i(g_j))_{i,j=1,\cdots, s} 
\]
is called the {\em character table} of $G$. 
The orthogonality of irreducible characters proves    
\[
{}^t\overline{\FX} \FX=(\delta_{i,j}|C_G(g_i)|)_{i,j=1,\cdots, s},  
\]
where $C_G(g_i)$ is the centralizer of $g_i$ in $G$.
Hence we have  
\begin{align}
c(G):=\prod_{i=1}^{s}|C_G(g_i)|=\det({}^t\overline{\FX} \FX). 
\end{align}

We consider diagonal matrices  
\[
\FK=(\delta_{i,j}|K_i|)_{i,j=1,\cdots, s},\quad \FD=(\delta_{i,j}\chi_{i}(1))_{i,j=1,\cdots, s}. 
\]
Then $h(G)=\det(\FK \FD^{-1})$. 
For $i,j=1,\cdots, s$, we define 
\[
\omega_{\chi_i}(g_j):=\frac{\chi(g_j)|K_j|}{\chi_i(1)}. 
\]
and consider a matrix 
\begin{align*}
\FW:=(\omega_{\chi_i}(g_{j}))_{i,j=1,\cdots, s}=\FD^{-1} \FX \FK. 
\end{align*}
Then  
\begin{align*}
{}^t\overline{\FW} \FW=\FK{}^t\overline{\FX} \FD^{-2} \FX \FK. 
\end{align*}
In particular, we have 
\begin{align*}
\det({}^t\overline{\FW} \FW)=h(G)^2c(G). 
\end{align*}

To express $\det({}^t\overline{\FW} \FW)$ as Gramian determinant, we consider the center of the group algebra $\C G$. 
The algebra $\C G$ of $G$ is a vector space over $\C$ with basis $\{[g]|g\in G\}$ whose multiplication is given by $[g][h]=[gh]$ for $g,h\in G$. 
For any subset $S\subset G$, we write $[S]=\sum_{g\in S}[g]$. 
Then the set 
\[
\CB_{\rm conj}(G)=\{[K]|K\in \Conj(G)\}
\]
forms a basis of $Z(\C G)$.
On the other hand, for any $\chi\in \Irr(G)$, 
\begin{align}\label{eqn2-2-2}
e_{\chi}=\frac{\chi(1)}{|G|}\sum_{g\in G}\overline{\chi(g)}[g]=\frac{\chi(1)}{|G|}\sum_{i=1}^{s}\overline{\chi(g_i)}[K_i]
\end{align}
becomes a primitive idempotent of $\C G$.
Furthermore, $e_{\chi}e_{\mu}=\delta_{\chi,\mu}e_{\chi}$ for any $\chi,\mu\in \Irr(G)$ and the set $\{e_{\chi}|\chi\in\Irr(G)\}$ also forms a basis of $Z(\C G)$.   
It is known that 
\begin{align}\label{eqn2-45}
[K_j]e_{\chi_i}=\omega_{\chi_i}(g_j) e_{\chi_i}
\end{align}
for any $i,j=1,\cdots,s$.  

Now we define $\overline{\cdot}:\C G\rightarrow \C G$ by 
\[
\overline{\sum_{g\in G}c_g [g]}=\sum_{g\in G}\overline{c_g} [g^{-1}], \quad c_g\in \C. 
\]
It is easy to see that $\overline{\cdot}$ is a semi-linear anti-automorphism and that  
\[
\overline{e_{\chi_i}}=e_{\chi_i},\quad \overline{[K_i]}=[K_{i^*}]
\]
for $i=1,\cdots, s$, where we write $K_{i^*}=\{g^{-1}|g\in K_i\}$. 
For the  basis $\CB_{\rm conj}(G)$ of $Z(\C G)$, we define    
\[
\gamma(G):=\gamma(\CB_{\rm conj}(G)). 
\]
As one of the main results of this article, we give a criterion for Harada\rq{}s conjecture II.  
\begin{theorem}\label{thm3-1}
Let $G$ be a finite group.
Then $\gamma(G)\in \Z$ and $\gamma(G)=h(G)^2c(G)$. 
Therefore, $h(G)\in \Z$ if and only if $c(G)$ divides $\gamma(G)$.  
\end{theorem}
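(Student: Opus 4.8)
The plan is to compute the Gramian matrix $\Gamma(\CB_{\mathrm{conj}}(G))$ explicitly and identify it with ${}^t\overline{\FW}\,\FW$, so that the already-established identity $\det({}^t\overline{\FW}\,\FW)=h(G)^2c(G)$ transfers directly to $\gamma(G)$. First I would expand each class sum in the idempotent basis: multiplying $1=\sum_k e_{\chi_k}$ by $[K_j]$ and using \eqref{eqn2-45} gives $[K_j]=\sum_{k=1}^{s}\omega_{\chi_k}(g_j)\,e_{\chi_k}$. Since $\overline{[K_i]}=[K_{i^*}]$ and the $e_{\chi_k}$ are orthogonal idempotents, this yields
\[
\overline{[K_i]}\,[K_j]=\sum_{k=1}^{s}\omega_{\chi_k}(g_{i^*})\,\omega_{\chi_k}(g_j)\,e_{\chi_k}.
\]
The remaining ingredient is $\tr_{Z(\C G)}e_{\chi_k}$: multiplication by $e_{\chi_k}$ acts on $Z(\C G)$ as the projection onto $\C e_{\chi_k}$ along the other idempotent lines, so in the basis $\{e_{\chi_l}\}$ it is diagonal with a single nonzero entry, whence its trace is $1$. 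Combined with $\omega_{\chi_k}(g_{i^*})=\overline{\chi_k(g_i)}\,|K_i|/\chi_k(1)=\overline{\omega_{\chi_k}(g_i)}$ (using $|K_{i^*}|=|K_i|$ and $\chi_k(g_i^{-1})=\overline{\chi_k(g_i)}$), I obtain
\[
\langle [K_i]\,|\,[K_j]\rangle=\sum_{k=1}^{s}\overline{\omega_{\chi_k}(g_i)}\,\omega_{\chi_k}(g_j)=({}^t\overline{\FW}\,\FW)_{ij}.
\]
Hence $\Gamma(\CB_{\mathrm{conj}}(G))={}^t\overline{\FW}\,\FW$, and taking absolute values of determinants gives $\gamma(G)=|\det({}^t\overline{\FW}\,\FW)|=h(G)^2c(G)$, the last equality because $c(G)=\prod_i|C_G(g_i)|>0$ and $h(G)^2\ge 0$.

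For integrality I would argue directly on the Gramian entries. Each $\langle [K_i]\,|\,[K_j]\rangle=\tr_{Z(\C G)}[K_{i^*}]\,[K_j]$ is a rational integer: the class-algebra structure constants are non-negative integers, so multiplication by any $\Z$-linear combination of class sums preserves the lattice $\bigoplus_l \Z[K_l]$; in the basis $\CB_{\mathrm{conj}}(G)$ it is therefore represented by an integer matrix and has integer trace. Thus $\Gamma(\CB_{\mathrm{conj}}(G))$ is an integer matrix, its determinant lies in $\Z$, and $\gamma(G)=|\det\Gamma(\CB_{\mathrm{conj}}(G))|\in\Z$.

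The criterion then follows formally from $\gamma(G)=h(G)^2c(G)$ with $\gamma(G),c(G)\in\Z$. If $h(G)\in\Z$, then $c(G)$ divides $h(G)^2c(G)=\gamma(G)$. Conversely, if $c(G)\mid\gamma(G)$, then $h(G)^2=\gamma(G)/c(G)$ is a non-negative integer; since $h(G)\in\Q$ is a root of the monic integer polynomial $x^2-h(G)^2$, it is an algebraic integer, hence $h(G)\in\Z$.

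I expect the main obstacle to be the bookkeeping in the first step: pinning down the complex conjugations carefully enough that the Gramian comes out as ${}^t\overline{\FW}\,\FW$ exactly (rather than its transpose or conjugate), and giving a clean justification of $\tr_{Z(\C G)}e_{\chi_k}=1$. Once those are settled, the integrality and the divisibility criterion are purely formal.
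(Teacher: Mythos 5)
Your proposal is correct and follows essentially the same route as the paper: both identify $\Gamma(\CB_{\rm conj}(G))$ with ${}^t\overline{\FW}\,\FW$ by diagonalizing the action of $\overline{[K_i]}[K_j]$ on the idempotent basis $\{e_{\chi_k}\}$, and both derive integrality of the Gramian entries from the non-negative integer structure constants of $Z(\C G)$ in the class-sum basis. Your only addition is spelling out the (standard, and correct) observation that a rational number with integer square is an integer, which the paper leaves implicit in the final "if and only if."
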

\begin{proof}
By \eqref{eqn2-45},  
\[
\overline{[K_i]}{[K_j]}e_{\chi_k}=\w_{\chi_k}(g_{i}^{-1})\w_{\chi_k}(g_{j})e_{\chi_k}=\overline{\w_{\chi_k}(g_{i})}\w_{\chi_k}(g_{j})e_{\chi_k}
\] 
for $i,j,k=1,\cdots, s$. 
Thus we have 
\[
\langle [K_i]|[K_{j}]\rangle=\sum_{k=1}^{s} \overline{\w_{\chi_k}(g_i)}\w_{\chi_k}(g_j). 
\] 
This is the $(i,j)$-entry of the matrix ${}^t\overline{\FW} {\FW}$.
Hence $\Gamma(\CB_{\rm conj}(G))={}^t\overline{\FW} \FW$ and 
\[
\gamma(G)=|\det\Gamma(\CB_{\rm conj}(G))|=|{}^t \overline{\FW}\FW|=h(G)^2c(G). 
\]

We consider the structure constants $\alpha_{ij}^k\in \Zpos$ of the algebra $Z(\C G)$ with respect to the basis $\CB_{\rm conj}(G)$ subject to 
\[
[K_i][K_j]=\sum_{k=1}^{s}\alpha_{ij}^k[K_k]
\] for $i,j,k=1,\cdots, s$. 
Then 
\[
\tr_{Z(\C G)}([K_k])=\sum_{l=1}^{s}\alpha_{kl}^{l}
\]
and 
\[
\langle[K_i]|[K_{j}]\rangle =\sum_{k=1}^{s}\sum_{l=1}^{s}\alpha_{i^*j}^{k}\alpha_{kl}^{l}\in \Zpos
\] 
for $i,j, k=1,\cdots, s$. 
Hence, $\Gamma(\CB_{\rm conj}(G))$ is an integer matrix and $\gamma(G)$ is also an integer. 
\end{proof}

We define 
\[
\mu(G):=\frac{\gamma(G)}{c(G)}
\]
for a finite group $G$. 
Then it follows from Theorem \ref{thm3-1} that $\mu(G)=h(G)^2$ and $h(G)\in\Z$ if and only if $\mu(G)\in \Z$.  

We can show the following proposition by suing the definition directly, 
\begin{proposition}\label{prop3-3}
Let $G,H$ be finite groups.
Then 
\[
\mu(G\times H)=\mu(G)^{|\Conj(H)|}\mu(H)^{|\Conj(G)|}.
\]  
\end{proposition}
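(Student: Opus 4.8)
The plan is to compute $\gamma$ and $c$ for the direct product $G\times H$ separately and to exhibit each as a product of powers of the corresponding quantities for $G$ and $H$; dividing the two results then yields the claim. Throughout I write $s=|\Conj(G)|$ and $t=|\Conj(H)|$, and I use the canonical algebra isomorphism $\C(G\times H)\cong \C G\otimes_{\C}\C H$ sending $[(g,h)]$ to $[g]\otimes[h]$. Under this isomorphism the center decomposes as $Z(\C(G\times H))\cong Z(\C G)\otimes Z(\C H)$, and since the conjugacy classes of $G\times H$ are exactly the products $K_i\times L_j$ with $K_i\in\Conj(G)$ and $L_j\in\Conj(H)$, the class sums satisfy $[K_i\times L_j]=[K_i]\otimes[L_j]$. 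Thus $\CB_{\rm conj}(G\times H)=\{[K_i]\otimes[L_j]\}$, and the whole problem reduces to understanding how the Hermitian form behaves on this tensor basis.

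First I would verify that the Hermitian form factors through the tensor decomposition. The anti-automorphism $\overline{\cdot}$ is compatible with the tensor product, since $\overline{[(g,h)]}=[(g^{-1},h^{-1})]=\overline{[g]}\otimes\overline{[h]}$, whence $\overline{[K_i]\otimes[L_j]}=\overline{[K_i]}\otimes\overline{[L_j]}$. Moreover, the operator ``multiply by $u\otimes v$'' on $Z(\C G)\otimes Z(\C H)$ is the tensor product of ``multiply by $u$'' and ``multiply by $v$'', so its trace is multiplicative: $\tr_{Z(\C G)\otimes Z(\C H)}\big((u\otimes v)\,\cdot\big)=\tr_{Z(\C G)}(u\,\cdot)\cdot\tr_{Z(\C H)}(v\,\cdot)$. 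Combining these gives
\[
\langle [K_i]\otimes[L_j]\,|\,[K_{i'}]\otimes[L_{j'}]\rangle=\langle [K_i]\,|\,[K_{i'}]\rangle\,\langle [L_j]\,|\,[L_{j'}]\rangle,
\]
so that $\Gamma(\CB_{\rm conj}(G\times H))=\Gamma(\CB_{\rm conj}(G))\otimes\Gamma(\CB_{\rm conj}(H))$ is the Kronecker product of the two Gramian matrices. The determinant of the Kronecker product of an $s\times s$ matrix and a $t\times t$ matrix equals $(\det\Gamma_G)^{t}(\det\Gamma_H)^{s}$, and therefore $\gamma(G\times H)=\gamma(G)^{t}\gamma(H)^{s}$.

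Next I would compute $c(G\times H)$. Choosing representatives $(g_i,h_j)$, one has $C_{G\times H}(g_i,h_j)=C_G(g_i)\times C_H(h_j)$, hence $|C_{G\times H}(g_i,h_j)|=|C_G(g_i)||C_H(h_j)|$, and so
\[
c(G\times H)=\prod_{i,j}|C_G(g_i)||C_H(h_j)|=c(G)^{t}c(H)^{s}.
\]
Dividing the two displays gives $\mu(G\times H)=\gamma(G\times H)/c(G\times H)=\mu(G)^{t}\mu(H)^{s}$, which is precisely the asserted identity once we substitute $t=|\Conj(H)|$ and $s=|\Conj(G)|$.

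I expect the only point requiring genuine care to be the multiplicativity of the trace of the multiplication operator under the tensor decomposition, that is, identifying multiplication by $u\otimes v$ with the tensor product of the two multiplication operators and then invoking $\tr(P\otimes Q)=\tr(P)\tr(Q)$; everything else (the Kronecker determinant identity and the centralizer computation) is routine bookkeeping. As a consistency check, the same conclusion follows immediately from Theorem \ref{thm3-1} together with the elementary identity $h(G\times H)=h(G)^{t}h(H)^{s}$, but the argument above is intrinsic to the Gramian framework and does not pass through the irreducible character degrees.
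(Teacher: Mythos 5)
Your proposal is correct and follows essentially the same route as the paper: factor the Gramian matrix of $G\times H$ as the Kronecker product $\Gamma(\CB_{\rm conj}(G))\otimes\Gamma(\CB_{\rm conj}(H))$ via the multiplicativity of the form on class sums, compute $c(G\times H)=c(G)^{t}c(H)^{s}$ from the centralizer factorization, and divide. The only difference is that you justify the form's multiplicativity explicitly through the tensor decomposition of the center and the trace identity, where the paper simply asserts it as a direct calculation.
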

\begin{proof}
Put $s=|\Conj(G)|$, $t=|\Conj(H)|$ and set ${\rm Conj}(G)=\{K_1,\cdots, K_s\}$ and ${\rm Conj}(H)=\{L_1,\cdots, L_t\}$.
Then 
\[
 {\rm Conj}(G\times H)=\{K_i\times L_j|i=1,\cdots, s, j=1,\cdots, t\}
\] 
and $ \CB_{\rm conj}(G)=\{[K_i\times L_j]|i=1,\cdots, s, j=1,\cdots, t\}$ 
is a basis of $Z(\C (G\times H))$. 
By direct calculations, 
\begin{align*}
\langle [K_{i}\times L_{j}]|[K_{m}\times L_{n}]\rangle 
=\langle [K_i]|[K_m]\rangle\langle [L_j]|[L_n]\rangle. 
\end{align*}
Hence 
\[
\Gamma(\CB_{\rm conj}(G\times H))=\Gamma(\CB_{\rm conj}(G))\otimes\Gamma(\CB_{\rm conj}(H)), 
\]
where $A\otimes B$ is the tensor product of the matrices $A$ and $B$. 
In particular,  one sees that $\gamma(G\times H)=\gamma(G)^t\gamma(H)^s$. 
Take representatives $g_i\in K_i$ and $h_j\in L_j$ for each $i$ and $j$. 
Then 
\[
C_{G\times H}((g_i,h_j))=C_G(g_i)\times C_H(h_j).
\] 
Hence 
\[
c(G\times H)=\prod_{i,j}(|C_G(g_i)||C_H(h_j)|)=c(G)^t c(H)^s
\] 
and we have $\mu(G\times H)=\mu(G)^t \mu(H)^s$. 
\end{proof}

\section{A generalization of Harada\rq{}s number} 
\subsection{A number $\mu_{\phi}(G)$  associated to characters $\phi$ of central subgroups of $G$}
Let $G $ be a finite group and $Z$ a subgroup of the center $Z(G)$. 
In this section, we shall define a number $\gamma_\phi(G)$ and an integer $c_\phi(G)$ for a character $\phi\in \Irr(Z)$.
As their ratio, we shall define $\mu_\phi(G)=\gamma_\phi(G)/c_\phi(G)$. 

Let $\phi\in \Irr(Z)$. 
Since $Z\leq Z(G)$, 
\begin{align}\label{char2-1}
e_{\phi}:=\frac{1}{|Z|}
\sum_{z\in Z}\overline{\phi(z)}[z] 
\end{align}
is an idempotent in $Z(\C G)$.
Since $\sum_{\phi\in \Irr(Z)}e_{\phi}=[1]$,   
\[
\C G=\bigoplus_{\phi\in \Irr(Z)} e_\phi \C G
\] 
is a direct sum of semi-simple ideals of $\C G$.
We note that $\{e_{\phi}|\phi\in \Irr(Z)\}$ is linearly independent and have 
\begin{align}\label{index2-1}
[\{e_{\phi}|\phi\in \Irr(Z)\}:\{[z]|z\in Z\}]^2=|Z|^{-2|Z|}|\det(\overline{\phi(z)})_{\phi\in \Irr(Z), z\in Z}|^2=|Z|^{-|Z|},
\end{align}
where the matrix $(\overline{\phi(z)})_{\phi\in \Irr(Z), z\in Z}$ is the character table of $Z$.
We see that 
\[
|\det(\overline{\phi(z)})_{\phi\in \Irr(Z), z\in Z}|^2=c(Z)=|Z|^{|Z|}.
\]

We regard $\Conj(G)$ as a $Z$-set via the action $(z, K)\mapsto zK$ for $K\in \Conj(G)$ and $z\in Z$.
The $Z$-orbits in $\Conj(G)$ are parameterized by the quotient $G/Z$ as follows. 
Let 
\[
\pi_Z:G\rightarrow G/Z,\quad g\mapsto gZ
\]
be the canonical projection and set $\Conj(G/Z)=\{\overline{K}_{i}|i\in I_{G/Z}\}$, where $I_{G/Z}$ is an index set of $\Conj(G/Z)$. 
Then take an element $g_i\in \pi_Z^{-1}(\overline{K}_i)$ for $i\in I_{G/Z}$ and set $K_i=g_i^G$. 
Then $\{K_i|i\in I_{G/Z}\}$ is a complete set of  representatives of $Z$-orbits in $\Conj(G)$.  
In particular, we have 
\begin{align}\label{conj2-1-1}
\Conj(G)=\{zK_j|z\in Z, j\in I_{G/Z}\}. 
\end{align}
For any $K\in \Conj(G)$ and $N\leq Z$, set 
\begin{align*}
{\rm Ann}_{Z}(K):&=\{z\in Z|zK=K\},\\
 \Conj(G:Z)_N:&=\{K\in \Conj(G)|{\rm Ann}_Z(K)=N\}.  
\end{align*}
The set $\Conj(G:Z)_N$ is a $Z$-subset of $\Conj(G)$. 
Let  
\[
I(G:Z)_N=\{j\in I_{G/Z}|{\rm Ann}_Z(K_j)=N\}. 
\]
\begin{lemma}\label{lemma3-1}
For any $N\leq Z$, $\sum_{X\leq N}|I(G:Z)_{X}|=|I(G/N : Z/N)_{\{1\}}|$. 
\end{lemma}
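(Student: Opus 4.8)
The plan is to read both sides as counting the same collection of $Z$-orbits in $\Conj(G)$, namely those whose stabilizer under the $Z$-action is contained in $N$. The left-hand side does this directly: since $\mathrm{Ann}_Z(K_j)$ is the stabilizer of the orbit $ZK_j$, the sum $\sum_{X\leq N}|I(G:Z)_X|$ counts exactly the $Z$-orbits $ZK_j$ with $\mathrm{Ann}_Z(K_j)\leq N$. The task is therefore to show that $|I(G/N:Z/N)_{\{1\}}|$ counts the same thing.

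First I would set up the central projection $\pi_N\colon G\to G/N$ (legitimate since $N\leq Z\leq Z(G)$, so $N\trianglelefteq G$) and the induced map $p\colon\Conj(G)\to\Conj(G/N)$, $g^G\mapsto(gN)^{G/N}$. Using that $N$ is central, for $n\in N$ one has $n\cdot g^G=(ng)^G$, so the $N$-action on $\Conj(G)$ (the restriction of the $Z$-action) is $K\mapsto nK$. I would then check that $p$ is surjective and that its fibers are exactly the $N$-orbits: the elements of $G$ mapping into a class $\bar K=(gN)^{G/N}$ form $\bigcup_{n\in N}(ng)^G$, again by centrality of $N$, so $p^{-1}(\bar K)=\{nK\mid n\in N\}$. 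This identifies $\Conj(G)/N$ with $\Conj(G/N)$, and because $N\leq Z$ the residual $Z/N$-action on $\Conj(G)/N$ matches the $Z/N$-action on $\Conj(G/N)$. Passing to orbit spaces yields a bijection between $Z$-orbits in $\Conj(G)$ and $(Z/N)$-orbits in $\Conj(G/N)$, sending $ZK$ to $(Z/N)\bar K$.

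The next step is to track stabilizers through this bijection. For $z\in Z$ one has $\bar z\cdot\bar K=\bar K$ in $\Conj(G/N)$ if and only if $zK$ and $K$ lie in the same fiber of $p$, i.e. $zK=nK$ for some $n\in N$, i.e. $z\in N\cdot\mathrm{Ann}_Z(K)$. Hence $\mathrm{Ann}_{Z/N}(\bar K)=(N\cdot\mathrm{Ann}_Z(K))/N$, and in particular $\mathrm{Ann}_{Z/N}(\bar K)=\{1\}$ exactly when $\mathrm{Ann}_Z(K)\leq N$. Therefore the free $(Z/N)$-orbits in $\Conj(G/N)$, counted by $|I(G/N:Z/N)_{\{1\}}|$, correspond precisely to the $Z$-orbits in $\Conj(G)$ with stabilizer contained in $N$, which is the left-hand side.

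I expect the only real care to be needed in the fiber computation — showing that the fibers of $p$ are precisely the $N$-orbits in $\Conj(G)$ — which rests squarely on the centrality of $N$; once that is in place, the identification of orbit spaces and the stabilizer bookkeeping are formal. One should also record at the outset that $Z/N\leq Z(G/N)$ (since $Z\leq Z(G)$ forces $\bar z$ to commute with every $\bar g$), so that the right-hand quantity $I(G/N:Z/N)_{\{1\}}$ is even defined.
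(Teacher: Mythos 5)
Your proof is correct and follows essentially the same route as the paper: both pass to the projection $\pi_N$, identify the $Z/N$-orbit representatives in $\Conj(G/N)$ with the images of the $K_i$, and use the stabilizer identity $\mathrm{Ann}_{Z/N}(\overline{K})=(N\cdot\mathrm{Ann}_Z(K))/N$ to conclude that the trivial-stabilizer condition downstairs is equivalent to $\mathrm{Ann}_Z(K)\leq N$ upstairs. The only difference is that you verify in detail the fiber/orbit identification that the paper simply asserts.
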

\begin{proof}
Let $\pi_N:Z\rightarrow Z/N$ be the canonical projection.   
We note that  $\{\pi_N(K_i)|i\in I_{G/Z}\} $ is a complete set of representatives of $Z/N$-orbits in $\Conj(G/N)$. 
One also has  
\[
{\rm Ann}_{Z/N}(\pi_N(K_i))=\pi_N({\rm Ann}_{Z}(K_i)N), \quad i\in I_{G/N}.
\] 
In particular, ${\rm Ann}_{Z/N}(\pi_N(K_i))=\{1\}$ if and only if ${\rm Ann}_{Z}(K_i)N=N$ and this is equivalent to ${\rm Ann}_{Z}(K_i)\leq N$. 
This implies that 
\begin{align}\label{eqn3-7}
I(G/N,Z/N)_{\{1\}}=\bigsqcup_{X\leq N}I(G,Z)_X.
\end{align}
Hence we have the lemma. 
\end{proof}

We set 
\begin{align}\label{basis37}
\CB(G)_{\phi}:&=\{e_\phi[K_j]|X\leq \Ker\phi, j\in I(G:Z)_{X}\} 
\end{align}
for any $\phi\in \Irr(Z)$. 

\begin{lemma}\label{lemma2-2}
Let $\phi\in\Irr(Z)$.  
Then the following hold. 
\begin{enumerate}

\item[(1)] For $K\in \Conj(G)$, if ${\rm Ann}_Z(K)$ is not a subgroup of $\Ker\phi$, then $e_{\phi}[K]=0$. 

\item[(2)] The set $\CB(G)_{\phi}$ is a basis of $e_\phi Z(\C G)$ and 
\[
\dim e_\phi Z(\C G)=\sum_{X\leq \Ker\phi}|I(G:Z)_{X}|=|I(G/\Ker\phi : Z/\Ker\phi)_{\{1\}}|.
\]  
\end{enumerate}
\end{lemma}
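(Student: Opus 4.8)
The plan is to prove (1) by a direct computation of $e_\phi[K]$, and then deduce (2) from (1) together with the orbit structure of $\Conj(G)$ as a $Z$-set and the eigenvalue relation $[z]e_\phi=\phi(z)e_\phi$.

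For (1), since each $z\in Z$ is central, $zK$ is again a conjugacy class and $[z][K]=[zK]$, so by \eqref{char2-1}
\[
e_\phi[K]=\frac{1}{|Z|}\sum_{z\in Z}\overline{\phi(z)}\,[zK].
\]
Writing $N:={\rm Ann}_Z(K)$ and grouping the sum over the cosets of $N$ in $Z$ (on each of which $z\mapsto zK$ is constant), every $z=wn$ with $n\in N$ gives $zK=wK$, so I would rewrite this as
\[
e_\phi[K]=\frac{1}{|Z|}\sum_{wN\in Z/N}\Big(\sum_{n\in N}\overline{\phi(wn)}\Big)[wK]
=\frac{1}{|Z|}\Big(\sum_{n\in N}\overline{\phi(n)}\Big)\sum_{wN\in Z/N}\overline{\phi(w)}\,[wK].
\]
The first factor is the character sum of $\overline{\phi}$ over the subgroup $N$, which equals $|N|$ when $\phi|_N$ is trivial (equivalently $N\le\Ker\phi$) and $0$ otherwise. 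Hence $e_\phi[K]=0$ whenever ${\rm Ann}_Z(K)\not\le\Ker\phi$, proving (1). When $N\le\Ker\phi$ the factor $\overline{\phi(w)}$ depends only on the coset $wN$, so the surviving expression is well defined, and its coefficient of $[K]$ (the term $w=1$) is $|N|/|Z|\ne0$, so in this case $e_\phi[K]\ne0$.

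For (2), the decomposition $\C G=\bigoplus_{\phi}e_\phi\C G$ shows that $e_\phi Z(\C G)$ is spanned by $\{e_\phi[K]\mid K\in\Conj(G)\}$, and by (1) only the classes with ${\rm Ann}_Z(K)\le\Ker\phi$ contribute. The key mechanism is the identity $[z]e_\phi=\phi(z)e_\phi$ for $z\in Z$, which follows from \eqref{char2-1} by reindexing; it gives $e_\phi[zK]=\phi(z)e_\phi[K]$, so all $e_\phi[K]$ lying in a single $Z$-orbit are nonzero scalar multiples of one another. Thus the spanning set collapses to one vector per $Z$-orbit, namely exactly $\CB(G)_\phi$, which by \eqref{conj2-1-1} and \eqref{basis37} is indexed by $\bigsqcup_{X\le\Ker\phi}I(G:Z)_X$.

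It remains to check linear independence. Using the explicit formula from (1), a representative $K_j$ with $N_j={\rm Ann}_Z(K_j)\le\Ker\phi$ yields a vector $e_\phi[K_j]$ that is a nonzero linear combination of the distinct class sums $[wK_j]$, $wN_j\in Z/N_j$, all lying in the single $Z$-orbit of $K_j$. Since distinct orbit representatives give disjoint $Z$-orbits, the vectors $\{e_\phi[K_j]\}$ have pairwise disjoint supports in the basis $\CB_{\rm conj}(G)$ and are each nonzero, hence linearly independent. Therefore $\CB(G)_\phi$ is a basis, $\dim e_\phi Z(\C G)=\sum_{X\le\Ker\phi}|I(G:Z)_X|$, and the final equality is Lemma \ref{lemma3-1} applied with $N=\Ker\phi$. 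I expect the main obstacle to be the bookkeeping in the coset computation for (1) and, in (2), making the disjoint-support argument airtight — in particular verifying that the class sums $[wK_j]$ arising from different orbit representatives never coincide.
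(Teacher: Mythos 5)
Your proof is correct, and while part (1) is essentially the paper's argument in expanded form, your linear-independence argument in (2) takes a genuinely different route. For (1) the paper simply picks one $z\in{\rm Ann}_Z(K)\setminus\Ker\phi$ and observes $e_\phi[K]=\phi(z)e_\phi[K]$ with $\phi(z)\neq 1$; your coset expansion with the vanishing character sum $\sum_{n\in N}\overline{\phi(n)}$ is the same mechanism made fully explicit, and it has the bonus of showing $e_\phi[K]\neq 0$ when ${\rm Ann}_Z(K)\leq\Ker\phi$, a fact the paper never isolates but which you correctly note is needed for independence. For (2) the paper does not argue orbit by orbit: it assembles the full set $\CB(G:Z)=\bigsqcup_{\psi\in\Irr(Z)}\CB(G)_\psi$ and computes the index $[\CB(G:Z):\CB_{\rm conj}(G)]$ in \eqref{eqn3-16}, concluding from its nonvanishing that all the $\CB(G)_\psi$ are simultaneously linearly independent. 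Your disjoint-support argument --- each $e_\phi[K_j]$ is a nonzero combination of the distinct class sums $[wK_j]$, $wN_j\in Z/N_j$, lying in the $Z$-orbit of $K_j$, and distinct $j\in I_{G/Z}$ give disjoint orbits --- is more elementary and self-contained for this lemma. What the paper's heavier computation buys is the explicit value of the index \eqref{eqn3-16}, which is reused in Section 3.2 to derive the factorization \eqref{index3-5} and ultimately Theorem \ref{thm3-7}; on your route that computation would still have to be done later. The final dimension equality via Lemma \ref{lemma3-1} applied with $N=\Ker\phi$ is exactly as in the paper.
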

 \begin{proof}
(1) Let $z\in {\rm Ann}_Z(K)-\Ker \phi$. 
Then for any $g\in K$, there exists $h\in G$ such that $g^h=zg$.
Thus $e_\phi[g^h]=\phi(z)e_\phi[g]$.
Therefore, $e_\phi[K]=\phi(z)e_\phi[K]$.
Since $\phi(z)\neq 1$, we have $e_\phi[K]=0$. 

(2)
It follows from \eqref{conj2-1-1} 
that $\{e_{\phi}[K_j]|j\in I_{G/Z}\}$ spans $e_\phi Z(\C G)$. 
By (1), $e_{\phi}[K_j]=0$ if $j\in I(G:Z)_{N}$ for $N\leq Z$ which is not a subgroup of $\Ker \phi$.
Hence $\CB(G)_{\phi}$ spans $e_\phi Z(\C G)$.  

On the other hand, we note that 
\[
\CB_{\rm conj}(G)=\bigsqcup_{N\leq Z}\bigsqcup_{j\in I(G:Z)_N} \{[z K_j]|z N\in Z/N\}
\] 
is linearly independent.
We also consider the set 
\[
\CB(G:Z):=\bigsqcup_{N\leq Z}\bigsqcup_{j\in I(G:Z)_N}\{e_\phi[K_j]|\phi\in \Irr(Z/N)\}. 
\]
Then we have 
\begin{align}
\begin{split}\label{eqn3-17}
\CB(G:Z)&=\bigsqcup_{N\leq Z}\bigsqcup_{\phi\in \Irr(Z/N)}\{e_\phi[K_j]| j\in I(G:Z)_N\}\\
&=\bigsqcup_{\phi\in \Irr(Z)}\bigsqcup_{N\leq \Ker\phi}\{e_\phi[K_j]| j\in I(G:Z)_N\}\\
&=\bigsqcup_{\phi\in \Irr(Z)}\CB(G)_\phi,  
\end{split}
\end{align}
where for any subgroup $N\leq Z$, we identify 
\[
\Irr(Z/N)=\{\phi\in \Irr(Z)|N\leq \Ker \phi\}. 
\] 
As in \eqref{index2-1}, we see that  
\begin{align*}
[\{e_\phi[K_j]|\phi\in \Irr(Z/N)\}:\{[zK_j]|zN\in Z/N\}]^2=|Z/N|^{-|Z/N|} 
\end{align*}
for $ N\leq Z$ and $j\in I(G:Z)_N$. 
Hence 
\begin{align}\label{eqn3-16}
\left[\CB(G:Z):\CB_{\rm conj}(G)\right]^2=\prod_{N\leq Z} |Z/N|^{-|Z/N||I(G:Z)_N|}. 
\end{align}
Therefore, $\CB(G:Z)$ is linearly independent and so is $\CB(G)_\phi$. 
Hence it is a basis of $e_\phi Z(\C G)$. 
In particular, by Lemma \ref{lemma3-1}, 
\begin{align*}
\dim e_\phi Z(\C G)=|\CB(G)_{\phi}|=\sum_{X\leq \Ker\phi}|I(G:Z)_{X}|=|I(G/\Ker\phi:Z/\Ker\phi)_{\{1\}}|. 
\end{align*} 
\end{proof}

For $\phi\in \Irr(Z)$, we define 
\begin{align*}
\gamma_\phi(G):=\gamma(\CB(G)_{\phi}) . 
\end{align*}
We also consider the integer  
\[
c_\phi(G):=\prod_{N\leq\Ker\phi}\prod_{j\in I(G:Z)_{N}} |C_G(g_j)||Z/N|^{-1}.
\]
and define 
\begin{align*}
\mu_\phi(G):=\frac{\gamma_\phi(G)}{c_\phi(G)}.  
\end{align*}

Let $\1$ is the trivial character of the unit group $\{1\}$. 
We note that $I(G:\{1\})_{\{1\}}$ becomes an index set of $\Conj(G)$ and hence $|I(G:\{1\})_{\{1\}}|=|\Conj(G)|$. 
We also see that $\gamma_{\1}(G)=\gamma(G)$ and $c_{\1}(G)=c(G)$.
Hence 
\[
\mu_{\1}(G)=\mu(G)=h(G)^2.
\]  
Therefore, $\mu_\phi(G)$ is a generalization of the square of Harada\rq{}s number. 


\begin{remark}
Let $\phi\in \Irr(Z)$. 
Since $\gamma_\phi(G)$ is an absolute value of a determinant and $|\phi(z)|=1$ for any $z\in Z$, it is independent of a choice of a complete set $\{K_i|i\in I(G/Z)\}$ of  representatives of $Z$-orbits in $\Conj(G)$.
\end{remark}

\subsection{A factorization of $\mu(G)$}
Let $G$ be a finite group and $Z$ a subgroup of $Z(G)$.
It follows from \eqref{eqn3-16} that 
\begin{align}
\begin{split}\label{index3-5}
\gamma(G)&=\gamma(\CB_{\rm conj}(G))\\
&=\gamma(\CB(G:Z))[\CB_{\rm conj}(G):\CB(G:Z)]^2\\
&=\gamma(\CB(G:Z))\prod_{N\leq Z}|Z/N|^{|Z/N||I(G:Z)_N|}. 
\end{split}
\end{align}
For distinct $\phi,\psi\in \Irr(Z)$, $\langle \CB(G)_\phi|\CB(G)_\psi\rangle=0$ since $e_{\phi}e_{\psi}=0$.  
Therefore, by \eqref{eqn3-17}, we obtain 
\begin{align}\label{gram3-10}
\gamma(\CB(G:Z))=\prod_{\phi\in \Irr(Z)}\gamma_\phi(G). 
\end{align}
Therefore, by \eqref{index3-5} we have 
\begin{align}\label{gram3-11}
\gamma(G)=\left(\prod_{\phi\in \Irr(Z)}\gamma_\phi(G)\right)\left(\prod_{N\leq Z}|Z/N|^{|Z/N||I(G:Z)_N|}\right). 
\end{align}
On the other hand, we can deduce 
\begin{align*}
&\left(\prod_{\phi\in \Irr(Z)}c_\phi(G)\right)\left(\prod_{N\leq Z}|Z/N|^{|Z/N||I(G:Z)_N|}\right)\\
&=\left(\prod_{\phi\in \Irr(Z)}\prod_{N\leq \Ker\phi}\prod_{i\in I(G:Z)_{N}}
|C_G(g_i)||Z/N|^{-1}\right)\left(
\prod_{N\leq Z} |Z/N|^{|I(G:Z)_N|}\right)^{|Z/N|}\\
&=\left(\prod_{N\leq Z}\prod_{\phi\in \Irr(Z/N)}\prod_{i\in I(G:Z)_{N}}
|C_G(g_i)||Z/N|^{-1}\right)\left(
\prod_{N\leq Z} |Z/N|^{|I(G:Z)_N|}\right)^{|Z/N|}\\
&=\prod_{N\leq Z}\left(|Z/N|^{-|I(G:Z)_N|}\prod_{i\in I(G:Z)_{N}}
|C_G(g_i)|\right)^{|Z/N|}\left(
\prod_{N\leq Z} |Z/N|^{|I(G:Z)_N|}\right)^{|Z/N|}\\
&=\prod_{N\leq Z}\left(\prod_{i\in I(G:Z)_{N}}|C_G(g_i)|\right)^{|Z/N|}\\
&=\prod_{N\leq Z}\prod_{\substack{i \in I(G:Z)_{N}\\ zN\in Z/N}}|C_G(zg_i)|\\
&=c(G). 
\end{align*}

%

Combining this and \eqref{gram3-11}, we have  
\begin{theorem}\label{thm3-7}
For any subgroup $Z$ of $Z(G)$,  
\begin{align}\label{gram3-12}
\mu(G)=\prod_{\phi\in \Irr(Z)}\mu_\phi(G). 
\end{align}

\end{theorem}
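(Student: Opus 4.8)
The plan is to combine the two factorizations established immediately before the statement, observing that both $\gamma(G)$ and $c(G)$ carry a common correction factor that cancels in their ratio. Set
\[
P:=\prod_{N\leq Z}|Z/N|^{|Z/N||I(G:Z)_N|}.
\]
By \eqref{gram3-11} the Gramian determinant factors as $\gamma(G)=\bigl(\prod_{\phi\in \Irr(Z)}\gamma_\phi(G)\bigr)P$, and the displayed computation preceding the theorem shows that the centralizer product carries the same factor, $c(G)=\bigl(\prod_{\phi\in \Irr(Z)}c_\phi(G)\bigr)P$. First I would record these two identities side by side so that $P$ is visibly the shared part.

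Next I would form the ratio defining $\mu(G)$ and cancel. Since $P\neq 0$,
\[
\mu(G)=\frac{\gamma(G)}{c(G)}=\frac{\prod_{\phi\in \Irr(Z)}\gamma_\phi(G)}{\prod_{\phi\in \Irr(Z)}c_\phi(G)}.
\]
Because both products run over the same finite set $\Irr(Z)$ and each $c_\phi(G)\neq 0$, I would regroup the quotient factor by factor, obtaining $\prod_{\phi\in \Irr(Z)}\bigl(\gamma_\phi(G)/c_\phi(G)\bigr)=\prod_{\phi\in \Irr(Z)}\mu_\phi(G)$, which is \eqref{gram3-12}.

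The genuine work lies in the two input identities rather than in this final cancellation, so that is where I would expect the obstacle. The $\gamma$-side identity \eqref{gram3-11} rests on two points: that distinct central idempotents are orthogonal, so the Gramian of $\CB(G:Z)$ block-diagonalizes along the decomposition \eqref{eqn3-17} and its determinant factors as in \eqref{gram3-10}; and that the base change from $\CB_{\rm conj}(G)$ to $\CB(G:Z)$ contributes exactly the index-squared factor \eqref{eqn3-16} through \eqref{eqn3-2}. The $c$-side identity is the more delicate bookkeeping: one reorganizes $\prod_{\phi}c_\phi(G)$ over pairs $(N,\phi)$ with $N\leq \Ker\phi$ into a product over $N\leq Z$ with $\phi$ ranging over the identification $\Irr(Z/N)$, then uses $|\Irr(Z/N)|=|Z/N|$ together with the orbit count of Lemma \ref{lemma3-1} to see the accumulated powers of $|Z/N|$ reassemble the centralizer product over a full set of representatives $zg_i$. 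I would verify the matching of the exponent of each $|Z/N|$ on the two sides subgroup-by-subgroup, since that exponent bookkeeping is the one place an error could break the cancellation.
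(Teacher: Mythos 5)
Your proposal is correct and follows essentially the same route as the paper: it combines the factorization \eqref{gram3-11} of $\gamma(G)$ with the displayed computation showing that $c(G)$ carries the identical factor $\prod_{N\leq Z}|Z/N|^{|Z/N||I(G:Z)_N|}$, and then cancels it in the ratio $\mu(G)=\gamma(G)/c(G)$. Your identification of where the real work lies (the orthogonality-based block diagonalization behind \eqref{gram3-10}, the index computation \eqref{eqn3-16}, and the reindexing of $\prod_\phi c_\phi(G)$ over pairs $(N,\phi)$ with $N\leq\Ker\phi$) matches the paper's development exactly.
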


\subsection{$\mu_\phi(G)$ in the case $\phi$ is non-faithful}
Let $G$ be a finite group and $Z$ a subgroup of $Z(G)$. 
Let $\phi\in \Irr(Z)$.
Then $\phi$ induces a faithful irreducible character $\ol{\phi}$ of $Z/\Ker\phi$ defined by $\ol{\phi}(z\Ker\phi)=\phi(z)$ for $z\in Z$. 
Recall the canonical projection $\pi_N:G\rightarrow G/N$ for any subgroup $N$ of $Z$. 
For the later use, we set 
\[
\kappa_N(G):=\prod_{X\leq N}|X|^{|I(G:Z)_{X}|}
\]
for any $N\leq Z$. 
We first prove the following lemma.   
\begin{lemma}\label{lemma3-6}
Let $\phi\in \Irr(Z)$. 
Then 
\[
\gamma_\phi(G)=\kappa_{\Ker\phi}(G)^2\gamma_{\overline{\phi}}(G/\Ker\phi). 
\]
\end{lemma}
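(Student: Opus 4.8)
The plan is to realize the passage from $G$ to $G/\Ker\phi$ through the algebra homomorphism induced by the quotient map, and to show it carries the two Gramian data onto one another up to an explicit diagonal rescaling. Write $N=\Ker\phi$ and let $\pi_N\colon \C G\to \C(G/N)$ be the $\C$-algebra homomorphism extending $[g]\mapsto [gN]$; note that $Z/N\le Z(G/N)$, so $e_{\ol\phi}$ is a central idempotent of $\C(G/N)$. First I would record two computations. Since $\phi$ is trivial on $N$ (so $\sum_{n\in N}\ol{\phi(n)}=|N|$), collecting the sum $e_\phi=\frac1{|Z|}\sum_{z\in Z}\ol{\phi(z)}[z]$ over the cosets of $N$ in $Z$ collapses to $\pi_N(e_\phi)=e_{\ol\phi}$. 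Second, for $j\in I(G:Z)_X$ with $X\le N$ I would compare centralizers: a short argument with the homomorphism $\{h\in G: hg_jh^{-1}\in g_jX\}\to X$ (kernel $C_G(g_j)$, surjective because $X={\rm Ann}_Z(K_j)$) gives $|C_{G/N}(g_jN)|=|X||C_G(g_j)|/|N|$, hence $|\pi_N(K_j)|=|K_j|/|X|$, and the $G$-equivariant map $\pi_N\colon K_j\to \pi_N(K_j)$ has constant fibre size $|X|$. Thus $\pi_N([K_j])=|X|\,[\ol K_j]$ with $\ol K_j:=\pi_N(K_j)$, and therefore $\pi_N(e_\phi[K_j])=|X|\,e_{\ol\phi}[\ol K_j]$.

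Combining these, $\pi_N$ restricts to a linear isomorphism $e_\phi Z(\C G)\xrightarrow{\sim}e_{\ol\phi}Z(\C(G/N))$ carrying the basis $\CB(G)_\phi$ to the rescaled family $\{|X|\,e_{\ol\phi}[\ol K_j]\}$; by \eqref{eqn3-7} the classes $\ol K_j$ are exactly a set of representatives for $I(G/N:Z/N)_{\{1\}}$, so (using the Remark on independence of the representative choice) this family is a diagonal rescaling of $\CB(G/N)_{\ol\phi}$, the scalars being $|{\rm Ann}_Z(K_j)|$. The central step is then to show the two Hermitian forms agree under $\pi_N$, i.e. $\langle u|v\rangle=\langle \pi_N(u)|\pi_N(v)\rangle$ for $u,v\in e_\phi Z(\C G)$. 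Here $\pi_N$ restricted to these blocks is not merely linear but a $*$-algebra isomorphism: it is multiplicative, sends $e_\phi\mapsto e_{\ol\phi}$, and commutes with $\ol{\cdot}$ since $\pi_N([g^{-1}])=[(gN)^{-1}]$. Because $e_\phi$ is a central idempotent and $Z(\C G)=\bigoplus_{\psi}e_\psi Z(\C G)$, left multiplication by $\ol uv\in e_\phi Z(\C G)$ annihilates every other block, whence $\tr_{Z(\C G)}(\ol uv)=\tr_{e_\phi Z(\C G)}(\ol uv)$; the same localization holds for $G/N$. Since an algebra isomorphism preserves the trace of left multiplication and $\pi_N(\ol uv)=\ol{\pi_N(u)}\pi_N(v)$, the two traces coincide, giving the claimed equality of forms.

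Feeding the basis elements into this identity yields $\langle e_\phi[K_i]|e_\phi[K_j]\rangle=|{\rm Ann}_Z(K_i)|\,|{\rm Ann}_Z(K_j)|\,\langle e_{\ol\phi}[\ol K_i]|e_{\ol\phi}[\ol K_j]\rangle$, i.e. $\Gamma(\CB(G)_\phi)=D\,\Gamma(\CB(G/N)_{\ol\phi})\,D$ with $D$ the positive diagonal matrix whose $j$-entry is $|{\rm Ann}_Z(K_j)|$. Taking absolute determinants and noting $\det D=\prod_{X\le N}|X|^{|I(G:Z)_X|}=\kappa_N(G)$ then gives $\gamma_\phi(G)=\kappa_N(G)^2\gamma_{\ol\phi}(G/N)$, as desired. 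I expect the main obstacle to be the middle paragraph: matching the two Hermitian forms even though the traces defining them are computed over different ambient algebras. The resolution is the observation that each form localizes to its $e_\phi$- (resp. $e_{\ol\phi}$-) block, on which $\pi_N$ is a genuine $*$-isomorphism; the fibre-size computation for $\pi_N([K_j])$, while elementary, is the other place where the annihilator bookkeeping must be handled with care.
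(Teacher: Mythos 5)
Your proposal is correct and follows essentially the same route as the paper: both push everything through the quotient map $\pi_N$, observe that $e_\phi[K_j]$ is sent to $|{\rm Ann}_Z(K_j)|\,e_{\ol\phi}[\pi_N(K_j)]$, and extract $\kappa_N(G)^2$ as the square of the (diagonal) change-of-basis determinant. Your middle paragraph in fact supplies a justification (localization of the trace to the $e_\phi$-block plus the fact that the restricted $\pi_N$ is a $*$-algebra isomorphism) for the step $\gamma(\CB(G)_\phi)=\gamma(F_\phi(\CB(G)_\phi))$ that the paper asserts without comment.
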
 
\begin{proof}
Set $N=\Ker\phi$.
We extend $\pi_N$ to an algebra homomorphism 
\[
\pi_N: \C G \rightarrow \C G/N,\quad  [g]\mapsto [\pi_N(g)],\quad  g\in G.  
\]  
We then have 
\begin{align*}
\pi_N(e_\phi[u])&=\frac{1}{|Z|}\sum_{z\in Z}\phi(z)^{-1}[\pi_N(zu)]\\
&=\frac{|N|}{|Z|}\sum_{\lambda\in Z/N}\overline{\phi}(\lambda)^{-1}[\lambda\pi_N(u)]\\
&=e_{\overline{\phi}}[\pi_N(u)].
\end{align*}
Thus we have an algebra homomorphism  
\[
F_\phi: e_\phi \C G \rightarrow e_{\overline{\phi}}\C G/N,\quad  e_\phi[g]\mapsto e_{\overline{\phi}}[\pi_N(g)],\quad  g\in G
\]
as the restriction of $\pi_N$ to $e_\phi \C G$. 
In fact, $F_\phi$ is an isomorphism since $F_\phi$ is surjective and 
\begin{align*}
\dim e_\phi \C G=[G:Z]=[G/N:Z/N]=\dim e_{\overline{\phi}} \C G/N.
\end{align*}
Let $X\leq N$ and $i\in I(G:Z)_{X}$.
Then  
\begin{align*}
F_\phi(e_\phi[K_i])&=\sum_{g\in K_i}F_\phi(e_\phi[g])\\
&=\sum_{g\in K_i}e_{\overline{\phi}}[\pi_N(g)]\\
&=|X|\sum_{\overline{g}\in \pi_N(K_i)}e_{\overline{\phi}}[\overline{g}]\\
&=|X|e_{\overline{\phi}}[\pi_N(K_i)]. 
\end{align*} 
This shows that $F_\phi(\CB(G)_{\phi})$ is a basis of $e_{\overline{\phi}}Z(\C G/N)$ and 
\[
[F_\phi(\CB(G)_{\phi}):\CB({G}/N)_{\overline{\phi}}]=\prod_{X\leq N}|X|^{|I(G:Z)_{X}|}=\kappa_{N}(G). 
\] 
Therefore, 
\begin{align*}
\gamma_{\phi}(G)=\gamma(\CB(G)_\phi)&=\gamma(F_\phi(\CB(G)_\phi))\\
&=\gamma(\CB(G/N)_{\overline{\phi}})[F_\phi(\CB(G)_{\phi}):\CB({G}/Z)_{\overline{\phi}}]^2\\
&=\kappa_{N}(G)^2 \gamma_{\overline{\phi}}({G}/N). 
\end{align*}
\end{proof}

We next consider the numbers $c_\phi(G)$ and $c_{\overline{\phi}}(G/N)$ for $\phi\in \Irr(Z)$ with $N=\Ker\phi$.
Let $X\leq N$ and $i\in I(G:Z)_{X}$. 
Since $|K_i|=|X||\pi_{N}(K_i)|$, we have 
\begin{align*}
|C_G(g_i)|=\frac{|G|}{|K_i|}=\frac{|G/N||N|}{|X||\pi_{N}(K_i)|}=|N/X||C_{G/N}(\pi_{N}(g_i))|. 
\end{align*}
Since 
\[
I(G/N,Z/N)_{\{1\}}=\bigsqcup_{X\leq N}I(G:Z)_X
\]
by \eqref{eqn3-7}, 
we  deduce that    
\begin{align*}
c_\phi(G)&=\prod_{X\leq N}\prod_{i\in I(G:Z)_{X}}|Z/X|^{-1}|C_{G}({g}_i)|\\
&=\prod_{X\leq N}\prod_{i\in I(G:Z)_{X}}|N/X||Z/X|^{-1}|C_{G/N}(\pi_{N}(g_i))|\\
&=\prod_{i\in I(G/N:Z/N)_{\{1\}}}|Z/N|^{-1}|C_{G/N}(\pi_{N}(g_i))|\\
&=c_{\overline{\phi}}(G/N),   
\end{align*}
where we note that $\ol{\phi}$ is faithful. 
By Lemma \ref{lemma3-6} we have 
\begin{align*}
\frac{\gamma_\phi(G)}{c_\phi(G)}&=\kappa_{\Ker\phi}(G)^2\frac{\gamma_{\ol{\phi}}(G/\Ker\phi)}{c_{\ol{\phi}}(G/\Ker\phi)}. 
\end{align*}
Now we have the following proposition. 
\begin{proposition}\label{prop4-12}
Let $G$ be a finite group and $Z$ a subgroup of $Z(G)$. 
Then for any $\phi\in \Irr(Z)$,  
\begin{align}\label{gram3-18}
\mu_\phi(G)=\kappa_{\Ker\phi}(G)^2\mu_{\overline{\phi}}(G/\Ker\phi). 
\end{align}
\end{proposition}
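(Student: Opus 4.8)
The plan is to reduce Proposition \ref{prop4-12} entirely to the two computational results already established just above its statement, namely Lemma \ref{lemma3-6} together with the explicit comparison of the centralizer products $c_\phi(G)$ and $c_{\overline{\phi}}(G/\Ker\phi)$. Set $N=\Ker\phi$ throughout. The definition $\mu_\phi(G)=\gamma_\phi(G)/c_\phi(G)$ means that the entire content of the proposition is the quotient of two identities: one for the Gramian determinant $\gamma_\phi(G)$ and one for the centralizer product $c_\phi(G)$.

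First I would invoke Lemma \ref{lemma3-6}, which gives
\[
\gamma_\phi(G)=\kappa_{N}(G)^2\,\gamma_{\overline{\phi}}(G/N).
\]
Next I would record the centralizer identity established in the displayed computation preceding the proposition, which shows
\[
c_\phi(G)=c_{\overline{\phi}}(G/N).
\]
The key input here is the relation $|C_G(g_i)|=|N/X|\,|C_{G/N}(\pi_N(g_i))|$ for $i\in I(G:Z)_X$ with $X\leq N$, obtained from $|K_i|=|X|\,|\pi_N(K_i)|$, combined with the disjoint-union decomposition $I(G/N:Z/N)_{\{1\}}=\bigsqcup_{X\leq N}I(G:Z)_X$ from \eqref{eqn3-7}; the factors $|N/X|$ and $|Z/X|^{-1}$ reorganize into $|Z/N|^{-1}$ over the combined index set, so that the product for $c_\phi(G)$ collapses exactly onto the product for $c_{\overline{\phi}}(G/N)$ of the faithful character $\overline{\phi}$.

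Finally I would form the ratio. Dividing the Gramian identity by the centralizer identity yields
\begin{align*}
\mu_\phi(G)=\frac{\gamma_\phi(G)}{c_\phi(G)}=\frac{\kappa_N(G)^2\,\gamma_{\overline{\phi}}(G/N)}{c_{\overline{\phi}}(G/N)}=\kappa_N(G)^2\,\frac{\gamma_{\overline{\phi}}(G/N)}{c_{\overline{\phi}}(G/N)}=\kappa_N(G)^2\,\mu_{\overline{\phi}}(G/N),
\end{align*}
which is precisely \eqref{gram3-18} with $N=\Ker\phi$.

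Since both halves have already been proved in the lemma and in the inline centralizer computation, the proposition itself is immediate and requires no genuinely new argument. The only place where any real work occurs is in those two preceding results, and of the two the Gramian identity of Lemma \ref{lemma3-6} is the substantive one: it rests on transporting the basis $\CB(G)_\phi$ through the isomorphism $F_\phi\colon e_\phi\C G\to e_{\overline{\phi}}\C G/N$ and tracking the index factor $\prod_{X\leq N}|X|^{|I(G:Z)_X|}=\kappa_N(G)$ via the transformation formula \eqref{eqn3-2}. Thus the anticipated obstacle is not in the proposition but already dispatched upstream; the proof of the proposition is simply the assembly of these two equalities into a quotient.
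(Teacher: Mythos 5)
Your proposal is correct and follows exactly the paper's own route: the paper likewise obtains Proposition \ref{prop4-12} by combining Lemma \ref{lemma3-6} with the inline computation showing $c_\phi(G)=c_{\overline{\phi}}(G/\Ker\phi)$ (via $|C_G(g_i)|=|N/X|\,|C_{G/N}(\pi_N(g_i))|$ and the decomposition \eqref{eqn3-7}), and then simply divides. There is nothing to add.
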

Let $\1_Z$ be a character of $Z$ defined by $\1_Z(z)=1$ for any $z\in Z$. 
Since $\ol{\1_Z}=\1$ and $\mu(G)=\mu_\1(G)$, as a corollary of Proposition  \ref{prop4-12}, we have   
\begin{corollary}\label{coro4-13}
Let $G$ be a finite group and $Z$ a subgroup of $Z(G)$. 
Then 
\begin{align}\label{gram3-19}
\mu_{\1_Z}(G)=(\kappa_{Z}(G)h(G/Z))^2. 
\end{align}
\end{corollary}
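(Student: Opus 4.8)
The plan is to specialize Proposition \ref{prop4-12} to the trivial character $\phi=\1_Z$. The first observation is that $\Ker\1_Z=Z$, since $\1_Z(z)=1$ for every $z\in Z$; consequently $Z/\Ker\1_Z=\{1\}$ is the trivial group, and the faithful character $\ol{\1_Z}$ induced from $\1_Z$ is nothing but the trivial character $\1$ of $\{1\}$.

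With this identification in hand, I would apply Proposition \ref{prop4-12} with $\phi=\1_Z$ and $N=\Ker\1_Z=Z$. This immediately yields
\[
\mu_{\1_Z}(G)=\kappa_{Z}(G)^2\,\mu_{\ol{\1_Z}}(G/Z)=\kappa_{Z}(G)^2\,\mu_{\1}(G/Z).
\]
It then remains to evaluate $\mu_{\1}(G/Z)$. Here I would invoke the observation recorded just after the definition of $\mu_\phi(G)$: for the trivial character $\1$ of the trivial group one has $\mu_{\1}(H)=\mu(H)=h(H)^2$ for any finite group $H$. Taking $H=G/Z$ gives $\mu_{\1}(G/Z)=h(G/Z)^2$.

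Combining the two evaluations produces
\[
\mu_{\1_Z}(G)=\kappa_{Z}(G)^2\,h(G/Z)^2=\bigl(\kappa_{Z}(G)\,h(G/Z)\bigr)^2,
\]
which is the claimed formula. There is essentially no obstacle beyond the bookkeeping: the entire content is the specialization $\ol{\1_Z}=\1$ together with the already-established identity $\mu_\1=h^2$. The only point deserving a moment's care is confirming that the induced character $\ol{\1_Z}$ is genuinely the trivial character of the trivial quotient $Z/Z$, so that $\mu_{\ol{\1_Z}}(G/Z)$ really is $\mu(G/Z)$ in the sense of the base case $\phi=\1$.
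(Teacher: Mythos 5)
Your proposal is correct and follows exactly the paper's own route: the paper derives the corollary by specializing Proposition \ref{prop4-12} to $\phi=\1_Z$, using $\ol{\1_Z}=\1$ and $\mu_{\1}(G/Z)=\mu(G/Z)=h(G/Z)^2$. Nothing is missing.
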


As a corollary of Theorem \ref{thm3-7} and Proposition \ref{prop4-12}, we have the following theorem.
\begin{theorem}\label{thm4-12}
Let $G$ be a finite group and $Z$ a subgroup of $Z(G)$. 
Then  
\begin{align}\label{gram3-20}
h(G)^2=\left(\prod_{N\leq Z}\kappa_{N}(G)^{|Z/N|}\right)^2\left(\prod_{\phi\in \Irr(Z)}\mu_{\ol{\phi}}(G/\Ker\phi)\right). 
\end{align}
\end{theorem}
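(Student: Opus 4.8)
The plan is to establish \eqref{gram3-20} by pure substitution, isolating a single product of $\kappa$-factors that must then be reorganized. First I would combine $h(G)^2=\mu(G)$ (Theorem \ref{thm3-1}) with Theorem \ref{thm3-7} and then Proposition \ref{prop4-12}, obtaining
\[
h(G)^2=\prod_{\phi\in\Irr(Z)}\mu_\phi(G)=\prod_{\phi\in\Irr(Z)}\kappa_{\Ker\phi}(G)^2\,\mu_{\ol{\phi}}(G/\Ker\phi).
\]
Separating the two kinds of factors gives
\[
h(G)^2=\left(\prod_{\phi\in\Irr(Z)}\kappa_{\Ker\phi}(G)\right)^{2}\left(\prod_{\phi\in\Irr(Z)}\mu_{\ol{\phi}}(G/\Ker\phi)\right),
\]
and the second factor is already exactly the one appearing in \eqref{gram3-20}. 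Hence the whole statement reduces to identifying the first factor with $\left(\prod_{N\leq Z}\kappa_N(G)^{|Z/N|}\right)^{2}$, i.e.\ to the identity $\prod_{\phi\in\Irr(Z)}\kappa_{\Ker\phi}(G)=\prod_{N\leq Z}\kappa_N(G)^{|Z/N|}$.

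To attack this identity I would unwind the definition $\kappa_{\Ker\phi}(G)=\prod_{X\leq\Ker\phi}|X|^{|I(G:Z)_X|}$ and interchange the order of the two products, regarding $\prod_{\phi}\kappa_{\Ker\phi}(G)$ as a product over all pairs $(\phi,X)$ with $X\leq Z$ and $X\leq\Ker\phi$. For a fixed $X\leq Z$ the condition $X\leq\Ker\phi$ says precisely that $\phi$ is trivial on $X$, i.e.\ that $\phi$ is inflated from a character of $Z/X$; as $Z$ is abelian there are exactly $|\Irr(Z/X)|=|Z/X|$ such characters. This produces the clean formula
\[
\prod_{\phi\in\Irr(Z)}\kappa_{\Ker\phi}(G)=\prod_{X\leq Z}|X|^{\,|I(G:Z)_X|\,|Z/X|},
\]
in which the exponent $|Z/X|$ already appears; it then remains only to repackage the right-hand side through the nested definition of the $\kappa_N(G)$.

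The main obstacle is exactly this repackaging, which is a multiplicity count on the subgroup lattice of $Z$. Expanding $\prod_{N\leq Z}\kappa_N(G)^{|Z/N|}$ and collecting terms, the total exponent attached to $|X|^{|I(G:Z)_X|}$ is $\sum_{N:\,X\leq N\leq Z}|Z/N|$, so a term-by-term comparison with the formula above demands the lattice identity $\sum_{N:\,X\leq N\leq Z}|Z/N|=|Z/X|$ for every $X\leq Z$. The natural way to verify an identity of this shape is to sort the $|Z/X|$ characters of $Z/X$ by their exact kernel $N$ (with $X\leq N\leq Z$), the number with exact kernel $N$ being the number of faithful irreducible characters of $Z/N$. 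Consequently the point I would check most carefully is whether the multiplicity that should sit on each $\kappa_N(G)$ is $|Z/N|$ or the number of faithful irreducible characters of $Z/N$: for $N=Z$ both equal $1$, the choice is immaterial wherever $\kappa_N(G)=1$ (as for $N=\{1\}$), and for $Z$ of prime order the only subgroups are $\{1\}$ and $Z$ so the two bookkeepings agree. For an intermediate subgroup $N$ the two counts differ in general, so pinning down this discrepancy — equivalently, confirming the Möbius-type identity on the subgroup lattice of $Z$ — is where the substantive content of the proof lies.
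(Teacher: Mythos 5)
Your substitution of Theorem \ref{thm3-7} into Proposition \ref{prop4-12} is exactly the paper's (one-line) derivation, and your bookkeeping of the resulting $\kappa$-factor is correct: counting, for fixed $X\leq Z$, the $|Z/X|$ characters $\phi\in\Irr(Z)$ with $X\leq\Ker\phi$ gives $\prod_{\phi\in\Irr(Z)}\kappa_{\Ker\phi}(G)=\prod_{X\leq Z}|X|^{|I(G:Z)_X|\,|Z/X|}$. The discrepancy you flag in your last paragraph is not a gap you failed to close but a genuine error in the statement: the lattice identity $\sum_{N\colon X\leq N\leq Z}|Z/N|=|Z/X|$ is false for every proper nontrivial $X$, since the term $N=X$ alone already contributes $|Z/X|$ and the term $N=Z$ contributes a further $1$. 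Hence $\prod_{N\leq Z}\kappa_N(G)^{|Z/N|}$ agrees with $\prod_{\phi}\kappa_{\Ker\phi}(G)$ only when $I(G:Z)_X=\emptyset$ for every proper nontrivial subgroup $X$ of $Z$. The correct exponent on $\kappa_N(G)$ is the number of $\phi\in\Irr(Z)$ with $\Ker\phi=N$, i.e.\ the number of faithful irreducible characters of $Z/N$; equivalently the first factor of \eqref{gram3-20} should read $\bigl(\prod_{X\leq Z}|X|^{|I(G:Z)_X|\,|Z/X|}\bigr)^{2}$. The two bookkeepings coincide exactly when $Z$ is trivial or of prime order, which covers Corollary \ref{Prime} and every application in Section 4, so the error is invisible in the rest of the paper.

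A concrete check that the printed formula fails: take $G=M(2^4)=\langle x,y\mid x^8=y^2=1,\ x^y=x^5\rangle$ and $Z=Z(G)=\langle x^2\rangle\cong\Z_4$, with intermediate subgroup $W=\langle x^4\rangle$. Here $|I(G:Z)_{\{1\}}|=1$, $|I(G:Z)_W|=3$, $|I(G:Z)_Z|=0$, so $\kappa_{\{1\}}(G)=1$ and $\kappa_W(G)=\kappa_Z(G)=8$; moreover $\mu_{\ol{\phi}}(G/\Ker\phi)=1$ for the two non-faithful $\phi$ (abelian quotients, Proposition \ref{abel}) and $\mu_{\phi}(G)=\tfrac14$ for the two faithful ones, so the second factor of \eqref{gram3-20} equals $2^{-4}$. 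The right-hand side as printed is $(1^4\cdot 8^2\cdot 8)^2\cdot 2^{-4}=2^{14}$, whereas $h(G)^2=16^2=2^8$ by Proposition \ref{prop-Mp}; the corrected first factor $\bigl(\prod_\phi\kappa_{\Ker\phi}(G)\bigr)^2=(8\cdot 8\cdot 1\cdot 1)^2=2^{12}$ does give $2^{12}\cdot 2^{-4}=2^8$. So your proposed argument is the right one; what it proves is the corrected identity, and no argument can reach the formula as printed.
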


Finally, we consider the case $Z$ is of prime order. 
Let $p$ be a prime dividing $|G|$ and $Z$ a subgroup of $Z(G)$ of order $p$.
Then we have 
\[
\kappa_Z(G)=\prod_{X\leq Z}|X|^{|I(G:Z)_{X}|}=p^{|I(G:Z)_{Z}|}. 
\]
We also see that $\Irr(Z)-\{\1_Z\}=\{\phi^a|a\in \Z_p^\times\}$ for some $\phi\in\Irr(Z)$. 
Therefore, we have 
\begin{corollary}\label{Prime}
Assume that $Z$ is of prime order $p$. 
Take $\phi\in \Irr(Z)-\{\1_Z\}$. 
Then 
\begin{align}\label{eqn3-22}
\mu(G)=p^{2|I(G:Z)_{Z}|}\mu(G/Z)\left(\prod_{a\in \Z_p^\times}\mu_{\phi^a}(G)\right). 
\end{align}
If $p>2$ then 
\[
h(G)=p^{|I(G:Z)_{Z}|}h(G/Z)\left(\prod_{a=1}^{(p-1)/2}\mu_{\phi^a}(G)\right). 
\]
\end{corollary}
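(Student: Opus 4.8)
The plan is to feed the prime-order subgroup $Z$ into Theorem~\ref{thm3-7} and then take a square root, the only real content being a symmetry $\mu_{\ol\phi}(G)=\mu_\phi(G)$ relating a central character to its complex conjugate. Since $|Z|=p$ is prime, its only subgroups are $\{1\}$ and $Z$, so the computation $\kappa_Z(G)=p^{|I(G:Z)_{Z}|}$ recorded above holds and $\Irr(Z)=\{\1_Z\}\sqcup\{\phi^a\mid a\in\Z_p^\times\}$. First I would apply Theorem~\ref{thm3-7} to this $Z$ and split off the trivial character:
\[
\mu(G)=\mu_{\1_Z}(G)\prod_{a\in\Z_p^\times}\mu_{\phi^a}(G).
\]
Rewriting the first factor with Corollary~\ref{coro4-13} and using $\mu(G/Z)=h(G/Z)^2$ gives $\mu_{\1_Z}(G)=(\kappa_Z(G)h(G/Z))^2=p^{2|I(G:Z)_{Z}|}\mu(G/Z)$, which is exactly the first displayed identity of the corollary.

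For the second identity I would isolate the key step as a lemma: $\mu_{\ol\phi}(G)=\mu_\phi(G)$ for every $\phi\in\Irr(Z)$. The cleanest route is to introduce the coefficient-wise conjugation $\sigma\colon\C G\to\C G$, $\sigma(\sum_g c_g[g])=\sum_g\ol{c_g}[g]$, a conjugate-linear ring automorphism that fixes every class sum $[K]$ and satisfies $\sigma(e_\psi)=e_{\ol\psi}$. Because $\Ker\phi=\Ker\ol\phi$, the index data defining $\CB(G)_\phi$ and $\CB(G)_{\ol\phi}$ coincide, so $\sigma$ carries $\CB(G)_\phi$ bijectively onto $\CB(G)_{\ol\phi}$. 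One checks that $\sigma$ commutes with $\ol{\,\cdot\,}$ and conjugates $\tr_{Z(\C G)}$, since the structure constants of $Z(\C G)$ in the class-sum basis are the non-negative integers $\alpha_{ij}^k$ of Theorem~\ref{thm3-1}; hence $\langle\sigma(u)|\sigma(v)\rangle=\ol{\langle u|v\rangle}$ and $\Gamma(\CB(G)_{\ol\phi})=\ol{\Gamma(\CB(G)_\phi)}$, so taking absolute values of determinants gives $\gamma_{\ol\phi}(G)=\gamma_\phi(G)$. As $c_\phi(G)$ depends only on $\Ker\phi=\Ker\ol\phi$, we also get $c_{\ol\phi}(G)=c_\phi(G)$, and therefore $\mu_{\ol\phi}(G)=\mu_\phi(G)$. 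Equivalently, one may expand $e_\phi[K_i]=\sum_{\chi}\omega_{\chi}(g_i)e_\chi$ over the $\chi$ lying above $\phi$, observe $\omega_{\ol\chi}(g)=\ol{\omega_\chi(g)}$, and read off $\Gamma(\CB(G)_{\ol\phi})=\ol{\Gamma(\CB(G)_\phi)}$ directly.

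To conclude, assume $p>2$, so $p-1$ is even and the involution $a\mapsto-a$ on $\Z_p^\times$ is fixed-point-free, partitioning $\Z_p^\times$ into the $(p-1)/2$ pairs $\{a,-a\}$. Using $\phi^{-a}=\ol{\phi^a}$ together with the lemma,
\[
\prod_{a\in\Z_p^\times}\mu_{\phi^a}(G)=\prod_{a=1}^{(p-1)/2}\mu_{\phi^a}(G)\,\mu_{\phi^{-a}}(G)=\Bigl(\prod_{a=1}^{(p-1)/2}\mu_{\phi^a}(G)\Bigr)^2.
\]
Substituting into the first identity and recalling $\mu(G)=h(G)^2$ exhibits $h(G)^2$ as the square of $p^{|I(G:Z)_{Z}|}h(G/Z)\prod_{a=1}^{(p-1)/2}\mu_{\phi^a}(G)$. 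Since $h(G),h(G/Z)>0$, $p^{|I(G:Z)_{Z}|}>0$, and each $\mu_{\phi^a}(G)=\gamma_{\phi^a}(G)/c_{\phi^a}(G)>0$, every factor is positive, so I may extract the positive square root and obtain the stated formula for $h(G)$.

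The main obstacle is the symmetry lemma $\mu_{\ol\phi}(G)=\mu_\phi(G)$; everything else is bookkeeping with Theorem~\ref{thm3-7} and Corollary~\ref{coro4-13}. It is precisely this pairing that forces $p>2$: for $p=2$ the set $\Z_2^\times=\{1\}$ consists of a single self-conjugate character, the lone factor $\mu_\phi(G)$ need not be a perfect square, and no square root of the product identity is available.
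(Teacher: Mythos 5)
Your proposal is correct and follows essentially the same route as the paper: the first identity comes from Theorem~\ref{thm3-7} together with Corollary~\ref{coro4-13} and $\kappa_Z(G)=p^{|I(G:Z)_Z|}$, and the second from pairing $\phi^a$ with $\phi^{-a}=\overline{\phi^a}$ and extracting a positive square root. The only difference is one of detail: the paper merely asserts the key symmetry $\gamma_{\phi^{-1}}(G)=\gamma_{\phi}(G)$, whereas you supply a proof of it via the coefficientwise-conjugation automorphism (and correctly note $c_{\phi^{-1}}(G)=c_\phi(G)$ since $c_\phi$ depends only on $\Ker\phi$), which is a sound and welcome elaboration.
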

\begin{proof}
The second assertion is verified by the fact that $\gamma_\phi(G)=\gamma_{\phi^{-1}}(G)$. 
\end{proof}
\begin{remark}
By Corollary \ref{Prime}, if all of the conditions (i) $p||Z(G)|$ for some prime $p>2$, (ii) $\mu_{\phi}(G)\in\Z$ and (iii) $h(G/Z)\in \Z$ are valid,  then $h(G)\in \Z$. 
We shall see that (ii) does not hold generally in Section 4.  
\end{remark}

\section{Examples}
In this section, we recalculate Harada\rq{}s numbers for some known examples by calculating $\mu_\phi(G)$ for some characters $\phi$. 
\subsection{Abelian groups}
It is clear that Harada\rq{}s number of an abelian group $A$ is $1$. 
We shall prove that $\mu_\phi(A)$ is also $1$ for any subgroup $Z$ of $A$ and $\phi\in \Irr(Z)$. 
\begin{proposition}\label{abel}
Let $A$ be a finite abelian group and $Z$ its subgroup. 
For any $\phi\in \Irr(Z)$, $\mu_\phi(G)=1$. 
\end{proposition}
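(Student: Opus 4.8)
The plan is to prove that $\mu_\phi(A)=1$ for a finite abelian group $A$, any subgroup $Z\leq A$, and any $\phi\in\Irr(Z)$, by showing that both $\gamma_\phi(A)$ and $c_\phi(A)$ equal the same quantity. Since $A$ is abelian, every conjugacy class is a singleton, so $\Conj(A)$ is identified with $A$ itself and the class sums are just the vectors $[a]$ for $a\in A$. In particular, for any $K=\{a\}\in\Conj(A)$ we have $\mathrm{Ann}_Z(K)=\{z\in Z\mid za=a\}=\{1\}$, so \emph{every} class has trivial annihilator. Consequently $I(A:Z)_N=\emptyset$ unless $N=\{1\}$, and $I(A:Z)_{\{1\}}$ is a complete set of representatives of the $Z$-cosets in $A$, i.e.\ $|I(A:Z)_{\{1\}}|=[A:Z]=|A/Z|$.

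With this simplification, the basis of Lemma~\ref{lemma2-2} reduces to $\CB(A)_\phi=\{e_\phi[a_j]\mid j\in I(A:Z)_{\{1\}}\}$, a set of $[A:Z]$ vectors, and $c_\phi(A)=\prod_{j\in I(A:Z)_{\{1\}}}|C_A(a_j)||Z|^{-1}=\prod_j |A|/|Z|=[A:Z]^{[A:Z]}$, since $C_A(a_j)=A$ for all $j$. It therefore remains to compute $\gamma_\phi(A)=\gamma(\CB(A)_\phi)$ and check it also equals $[A:Z]^{[A:Z]}$. The cleanest route is to apply Proposition~\ref{prop4-12} (or Lemma~\ref{lemma3-6}) to reduce to the faithful case: writing $N=\Ker\phi$, we have $\mu_\phi(A)=\kappa_N(A)^2\,\mu_{\ol\phi}(A/N)$, and since $I(A:Z)_X=\emptyset$ for all $X\neq\{1\}$ we get $\kappa_N(A)=\prod_{X\leq N}|X|^{|I(A:Z)_X|}=1$. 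Thus $\mu_\phi(A)=\mu_{\ol\phi}(A/N)$ with $\ol\phi$ faithful on $Z/N$, and it suffices to treat faithful $\phi$.

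For faithful $\phi$ the idempotents $e_\phi[a_j]$ are mutually orthogonal eigenvectors, so I would compute the Gramian directly. The key computation is the Hermitian form $\langle e_\phi[a_i]\mid e_\phi[a_j]\rangle=\tr_{Z(\C A)}\,\overline{e_\phi[a_i]}\,e_\phi[a_j]$. Using $\overline{e_\phi}=e_{\ol\phi}$ (the conjugate character) together with $e_\phi e_{\ol\phi}=\delta\,e_\phi$ when $\phi$ is faithful, and the fact that $e_\phi[a_i]$ for distinct cosets $a_iZ$ are supported on disjoint cosets, the off-diagonal entries vanish and each diagonal entry is a fixed positive constant. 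Specifically, each $e_\phi[a_j]$ spans a one-dimensional ideal on which multiplication acts by a scalar, and the trace over the $[A:Z]$-dimensional space $e_\phi Z(\C A)$ of such a rank-one multiplication yields $\langle e_\phi[a_j]\mid e_\phi[a_j]\rangle=[A:Z]$. Hence $\Gamma(\CB(A)_\phi)=[A:Z]\,\mathrm{Id}$ of size $[A:Z]$, giving $\gamma_\phi(A)=[A:Z]^{[A:Z]}=c_\phi(A)$ and $\mu_\phi(A)=1$.

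The main obstacle I anticipate is the precise normalization in the diagonal Gramian entry: getting the scalar $\langle e_\phi[a_j]\mid e_\phi[a_j]\rangle$ exactly right requires careful bookkeeping of the idempotent normalizations $\tfrac{1}{|Z|}$ and the trace taken over the correct subspace $e_\phi Z(\C A)$ rather than all of $Z(\C A)$. An alternative that sidesteps this is to avoid computing $\gamma_\phi$ directly and instead invoke the factorization $\mu(A)=\prod_{\psi\in\Irr(Z)}\mu_\psi(A)$ of Theorem~\ref{thm3-7}: since $\mu(A)=h(A)^2=1$ and each $\mu_\psi(A)$ is a ratio of positive real quantities, one would then need a separate symmetry or positivity argument forcing each factor to equal $1$. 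I expect the direct diagonal computation to be the more robust path, with the normalization constant being the one genuinely delicate point.
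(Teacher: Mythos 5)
Your overall route is essentially the paper's: show $c_\phi(A)=|A/Z|^{|A/Z|}$ (correct as you compute it) and show that the Gramian of $\CB(A)_\phi$ is diagonal with diagonal entries $|A/Z|$, so $\gamma_\phi(A)=|A/Z|^{|A/Z|}$. The paper does not pass through Proposition \ref{prop4-12}; it handles arbitrary $\phi$ at once by fixing a section $s:A/Z\to A$ with $s(\pi_Z(1))=1$ and the $2$-cocycle $\alpha(u,v)=s(u)s(v)s(uv)^{-1}$, which gives $e_\phi[s(u)]e_\phi[s(v)]=\phi(\alpha(u,v))e_\phi[s(uv)]$, hence $\tr_{e_\phi\C A}e_\phi[s(u)]=\delta_{u,\pi_Z(1)}|A/Z|$ and the diagonal Gramian. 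Your preliminary reduction to faithful $\phi$ via $\kappa_{\Ker\phi}(A)=1$ is valid and harmless, just unnecessary.

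Two of your supporting claims are misstated and would derail the computation if taken literally. First, for the bar map of this paper one has $\overline{e_\phi}=e_\phi$ (since $\overline{\phi(z)}=\phi(z^{-1})$), \emph{not} $\overline{e_\phi}=e_{\ol\phi}$ with $\ol\phi$ the complex-conjugate character; combined with your assertion $e_\phi e_{\ol\phi}=\delta e_\phi$, your version would give $\overline{e_\phi[a_i]}\,e_\phi[a_j]=0$ whenever $\phi$ is not real-valued, i.e.\ a zero Gramian. The correct identity $\overline{e_\phi[a_i]}\,e_\phi[a_j]=e_\phi[a_i^{-1}a_j]$ is what you need. Second, $\C\, e_\phi[a_j]$ is not an ideal and multiplication does not act on it by a scalar: multiplication by $e_\phi[a]$ \emph{permutes} the lines $\C\, e_\phi[a_j]$ (up to unimodular scalars) according to the coset of $a$. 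That permutation picture is the actual reason the trace of $e_\phi[a]$ on $e_\phi\C A$ vanishes for $a\notin Z$ (no fixed cosets) and equals $\dim e_\phi\C A=|A/Z|$ for the trivial coset, which yields exactly the diagonal entries you want. With those two points repaired, your argument closes correctly.
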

\begin{proof}
We take a section $s:A/Z\rightarrow A$. 
For simplicity, assign $s(\pi_Z(1))=1$.  
We may set $I_{A/Z}=A/Z$ as  
\[
\Conj(A/Z)=\{\{u\}|u\in A/Z\}.
\]
Since $A$ is abelian, we have $I(A:Z)_{\{1\}}=I_{A/Z}=A/Z$ and $I(A:Z)_{N}=\emptyset$ for any $\{1\}\neq N\leq Z$. 
In particular, $|I(A:Z)_{\{1\}}|=|A/Z|$ and $|I(A:Z)_{N}|=0$ for $N\neq\{1\}$. 

Let $\alpha:A/Z\times A/Z\rightarrow Z$ be a $2$-cocycle defined by 
\[
\alpha(u,v)=s(u)s(v)s(u v)^{-1}
\] 
for $u,v\in A/Z$. 
For any $\phi\in \Irr(Z)$, we have $\CB(A)_\phi=\{e_\phi[s(u)]|u\in A/Z\}$.
Since $[s(u)][s(v)]=[\alpha(u,v)s(u v)]$ we have 
\[
e_\phi[s(u)]e_{\phi}[s(v)]=\phi(\alpha(u,v))e_\phi[s(u v)]. 
\] 
This implies that $\tr_{e_{\phi}\C A}e_{\phi}[s(u)]=\delta_{u,\pi_Z(1)}|A/Z|$ for $u\in A/Z$.  
Therefore, 
\[
\langle e_\phi[s(u)]|e_{\phi}[s(v)]\rangle=\tr_{e_\phi \C A}e_{\phi}[s(u)^{-1}s(v)]=\delta_{u,v}\frac{\phi(\alpha(u^{-1},v))}{\phi(\alpha(u^{-1},u))}|A/Z|  
\]
for $u,v\in A/Z$. 
Since $|\phi(\alpha(u,v))|=1$ for any $u,v\in A/Z$, we have  
\begin{align}\label{eqn4-111}
\gamma_\phi(A)=|A/Z|^{|A/Z|}.
\end{align}
We also see that 
\[
c_\phi(A)=\prod_{N\leq \Ker\phi}\prod_{j\in I(G:Z)_N}|Z/N|^{-1}|A|=(|Z|^{-1}|A|)^{|I(A;Z)_{\{1\}}|}=|A/Z|^{|A/Z|}.
\]
Hence we have $\mu_\phi(A)=1$. 
\end{proof}
 
\subsection{Dihedral groups}
In this section we consider the dihedral groups $G={\Dih}_{2n}$ of order $2n$ for $n\geq 3$.
We use the following presentation of the dihedral group  
\[
G={\Dih}_{2n}=\langle \sigma, \tau|\sigma^{n}=\tau^2=1, \quad \sigma^\tau=\sigma^{-1}\rangle  
\] 
for $n\in \Zplus$. 

\begin{proposition}\label{prop4-222}
Let $n$ be a positive integer and $G={\Dih}_{2n}$.
Then 
\[
h(G)
=\begin{cases}
n=|G\rq{}|&\text{ if $n$ is odd},\\
\frac{n^2}{4}=|G\rq{}|^2&\text{ if $n$ is even}.
\end{cases}
\]
\end{proposition}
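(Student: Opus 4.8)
The plan is to read off $h(G)$ directly from its definition, since the conjugacy classes and the degrees of the irreducible characters of $\Dih_{2n}$ are completely explicit; the only real subtlety is the parity of $n$, which simultaneously controls the fusion of reflections into conjugacy classes and the number of linear characters, so I would run the whole argument in the two cases $n$ odd and $n$ even.

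First I would list the conjugacy classes. In $G=\langle\sigma,\tau\rangle$ the rotations $\sigma^i$ and $\sigma^{-i}$ are conjugate via $\tau$, so the powers of $\sigma$ contribute the class $\{1\}$ together with the size-$2$ classes $\{\sigma^i,\sigma^{-i}\}$. When $n$ is odd, all $n$ reflections $\sigma^j\tau$ form a single class of size $n$, giving $\prod_i|K_i|=2^{(n-1)/2}\,n$. When $n$ is even, $\sigma^{n/2}$ is central (an extra singleton class) and the reflections split into the two classes $\{\sigma^{2j}\tau\}$ and $\{\sigma^{2j+1}\tau\}$, each of size $n/2$, giving $\prod_i|K_i|=2^{n/2-1}(n/2)^2$.

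Next I would record the character degrees. The abelianization $G/G'$ has order $2$ when $n$ is odd and order $4$ when $n$ is even, so there are $2$ respectively $4$ linear characters, and all remaining irreducible characters are two-dimensional ($(n-1)/2$ of them when $n$ is odd, $(n/2)-1$ when $n$ is even); in both cases the identity $\sum_i\chi_i(1)^2=2n$ confirms the count. Thus $\prod_i\chi_i(1)=2^{(n-1)/2}$ for $n$ odd and $2^{n/2-1}$ for $n$ even. Forming the ratio, the powers of $2$ cancel and I obtain $h(G)=n$ for $n$ odd and $h(G)=(n/2)^2=n^2/4$ for $n$ even. Finally I would identify $G'=\langle\sigma\rangle$ of order $n$ in the odd case and $G'=\langle\sigma^2\rangle$ of order $n/2$ in the even case, which matches $h(G)=|G'|$ and $h(G)=|G'|^2$ respectively.

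I do not anticipate a genuine obstacle: the computation is bookkeeping, and the only point needing care is the parity case-split --- specifically, that the reflections fuse into one class (odd) versus two classes (even), exactly mirrored by two versus four linear characters, which is what produces the clean cancellation of the $2$-powers. As an alternative that stays within the framework of this paper, one could instead take $Z=Z(G)$ and invoke Theorem \ref{thm4-12} (in the even case $Z(G)\cong\Z/2$ is of prime order, so Corollary \ref{Prime} and equation \eqref{eqn3-22} apply with $p=2$), reducing the problem to the Gramian computation of the $\mu_\phi(G)$; this is more in keeping with the preceding sections, but for a group this explicit it is heavier than the direct count.
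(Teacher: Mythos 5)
Your computation is correct: the class sizes, the count of linear versus two-dimensional characters in each parity case, and the resulting cancellation of the powers of $2$ all check out, and the identification of $|G'|$ ($=n$ for $n$ odd since $\langle\sigma^2\rangle=\langle\sigma\rangle$, $=n/2$ for $n$ even) matches the stated formula. However, this is a genuinely different route from the paper's. The paper deliberately avoids the character table and instead works inside $Z(\C G)$: for $n$ odd it computes the structure constants of the class sums, assembles the Gramian matrix $\Gamma(G)$, and finds $\gamma(G)=4n^{(n+5)/2}$ and $c(G)=4n^{(n+1)/2}$, whence $\mu(G)=n^2$; for $n\equiv 2\pmod 4$ it uses the splitting $G\cong\Z_2\times{\Dih}_{2q'}$ and Proposition \ref{prop3-3}; and for $n\equiv 0\pmod 4$ it takes $Z=Z(G)$ of order $2$, computes $\mu_\phi(G)=\tfrac14$ for the faithful $\phi\in\Irr(Z)$ and $\mu_{\1_Z}(G)=(8h({\Dih}_{4q}))^2$ via Corollary \ref{coro4-13}, obtaining the recursion $h({\Dih}_{8q})=4h({\Dih}_{4q})$ which is then unwound to the previously settled cases. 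Your direct count is shorter and more elementary; what the paper's longer route buys is a demonstration of the Gramian/central-character machinery that is the point of Section 4 (the section explicitly ``recalculates'' known values by this method), and in particular it exhibits the instructive fact that $\mu_\phi(G)$ fails to be an integer ($=\tfrac14$) for the faithful central character --- information invisible to the direct computation. Your closing remark correctly anticipates this alternative, so there is no gap; just be aware that your proof, while valid, does not exercise the framework the example is meant to illustrate.
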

\begin{proof}
We give a proof by dividing three cases. 

{\bf The case $n$ is odd}:
In this case, the conjugacy classes of $G$ is 
\begin{align*}
K_0&=\{e\},\quad K_{i}=\{\sigma^{i},\sigma^{-i}\},\quad i=1,\cdots,\tfrac{n-1}{2},\\
K_{n+1}&=\{\sigma^i\tau|i=0,\cdots, n-1\}. 
\end{align*}
Set $I=\{0,1,\cdots, \tfrac{n-1}{2}, n+1\}$. 
We then obtain the following identities:  
\begin{align}
\begin{split}\label{eqn4-11}
[K_0][K_i]&=[K_i], \quad i\in I,\\ 
[K_i][K_j]&=\begin{cases}
[K_{i+j}]+[K_{|i-j|}]&\text{if }1\leq i\leq j\leq \tfrac{n-1}{2},\quad i\neq j,\\
[K_{2i}]+2[K_{0}]&\text{if }1\leq 2i \leq \frac{n-1}{2},\quad i=j,\\
[K_{n-(i+j)}]+[K_{|i-j|}]&\text{if } \tfrac{n+1}{2}\leq i+j\leq n-1,\quad i\neq j ,\\
[K_{n-2i}]+2[K_{0}]&\text{if }
\tfrac{n+1}{2}\leq 2i \leq n-1,\quad i=j, 
\end{cases} \\
[K_{n+1}][K_{i}]&=\begin{cases}
[K_{n+1}]\quad &i=0,\\
2[K_{n+1}]\quad &i=1,\cdots, \tfrac{n-1}{2},\\
n(\sum_{i=0}^{\frac{n-1}{2}} [K_{i}]) &i=n+1.  
\end{cases}
\end{split}
\end{align}
These identities imply that for any $1\leq i\leq \frac{n-1}{2}$, there exists $1\leq j\leq \frac{n-1}{2}$ uniquely such that $[K_i][K_j]=[K_j]$.
Hence we have  
\begin{align*}
\tr_{Z(\C G)}[K_0]&=\frac{n+3}{2},\quad \tr_{Z(\C G)}[K_i]=3\quad \text{for $i\in I-\{0,n+1\}$}, \\
\tr_{Z(\C G)}[K_{n+1}]&=0. 
\end{align*}
Thus we have 
\begin{align}\label{eqn4-12}
\Gamma(G)=\begin{pmatrix}
\frac{n+3}{2}&3&3&\cdots &3&0\\
3&n+6&6&\cdots &6&0\\
3&6&n+6&\cdots &6&0\\
\vdots& &&\ddots &\vdots&\vdots\\
3&6&6&\cdots &n+6&0\\
0&0&0&\cdots &0&2n^2
\end{pmatrix}.
\end{align}
We can calculate that $\gamma(G)=4n^{\frac{n+5}{2}}$. 
On the other hand, $c(G)=(2n)n^{\frac{n-1}{2}}\cdot 2=4n^{\frac{n+1}{2}}$.
Hence 
\begin{align}\label{eqn4-22}
h(G)^2=\mu(G)=\gamma(G)/c(G)=n^2.
\end{align}
Thus $h(G)=n=|G\rq{}|$. 

{\bf The case $n\equiv 2\pmod{4} $}: 
Suppose $n=2q\rq{}$ with {\em odd} integer $q\rq{}\in \Zplus$. 
Then 
we have $G\cong\langle z\rangle\times {\Dih}_{2q\rq{}}$. 
By \eqref{eqn4-22} and Proposition \ref{prop3-3}, 
\[
h(G)=h({\Dih}_{2q\rq{}})^2=(q\rq{})^2.
\] 
Since $|G'|=|\langle \sigma^2\rangle|=q\rq{}$, we have 
\begin{align}\label{eqn4-15}
h(G)=(q\rq{})^2=|G'|^2. 
\end{align}

{\bf The case $n\equiv 0\pmod{4}$}: 
Suppose $n=4q$ with integer $q\geq 2$.   
We see that $z=\sigma^{2q}$ generates the center $Z(G)$ and that
\begin{align}
\begin{split}\label{eqn3-71}
K_0&=\{1\},\quad K_i=\{\sigma^i,\sigma^{-i}\},\quad (i=1,\cdots, 2q-1),\quad K_{2q}=\{z\},\\
K_{2q+1}&=\{\sigma^{2i}\tau|i=0,1,\cdots, 2q-1\},\quad K_{2q+2}=\{\sigma^{2i+1}\tau|i=0,1,\cdots, 2q-1\}
\end{split}
\end{align}
are the list of all conjugacy classes of $G$. 

Set $Z=Z(G)=\langle z\rangle$. 
Note that $G/Z \cong {\Dih}_{4q}$.
Among of the conjugacy classes of $G$ in \eqref{eqn3-71}, only $K_{q}, K_{2q+1},K_{2q+2}$ satisfy the condition $zK_i=K_i$. 
Thus 
\begin{align}
\Conj(G)_{\{1\}}&=\{z^aK_i|0\leq i\leq q-1, a=0,1\},\\
\Conj(G)_Z&=\{K_{q}, K_{2q+1}, K_{2q+2}\}. 
\end{align}
We may set $I(G:Z)_{\{1\}}=\{0,1,\cdots,q-1\}$ and $I(G:Z)_{Z}=\{q,2q+1,2q+2\}$. 

Let $\phi\in\Irr(Z)$ be the character defined by 
\[
\phi(z)=-1.
\] 
We can show that 
\[
e_\phi [K_i]e_\phi[K_j]=\begin{cases}e_\phi[K_{i+j}]+e_\phi [K_{|i-j|}],&\text{ if }i\neq j,\\
e_\phi [K_{2i}]+2e_\phi[K_0],&\text{ if }i=j
\end{cases}
\]
for $i, j\in I(G:Z)_{\{1\}}-\{0\}$ and that $e_\phi [K_0]e_{\phi}[K_i]=e_{\phi}[K_i]$ for $i\in I(G:Z)_{\{1\}}$. 
We note that $e_\phi[K_{i}]=-e_\phi[K_{2q-i}]$ for $0\leq i\leq q$.
Thus for $1\leq i,j\leq q-1$,  
\begin{align*}
e_\phi[K_i]e_\phi [K_j]=
\begin{cases}
e_{\phi}[K_{i+j}]+e_{\phi}[K_{|i-j|}] &\text{if } 0< i+j<q, i\neq j,\\
e_{\phi}[K_{2j}]+2e_{\phi}[K_0]  &\text{if } 0< 2i<q, i=j,\\
-e_{\phi}[K_{2q-(i+j)}]+e_{\phi}[K_{|i-j|}]  &\text{if } q\leq i+j<2q, i\neq j,\\
-e_{\phi}[K_{2q-2i}]+2e_{\phi}[K_0]  &\text{if } q\leq 2i<2q, i=j.\\
\end{cases}
\end{align*} 
These relations lead that $\tr_{e_{\phi}Z(\C G)}(e_{\phi}[K_0])=q$ and $\tr_{e_\phi Z(\C G)}(e_{\phi}[K_i])=0$ for $1\leq i\leq q-1$. 
Since $0\leq i+j<2q$, $0\leq |i-j|<q$ for any $0\leq i,j\leq q-1$ and ${K_{i^*}}=K_i$,  
\[
\langle e_{\phi}[K_i]|e_{\phi}[K_j] \rangle =\begin{cases} 
q &\text{ if } i=j=0,\\
2q &\text{ if } i=j\neq 0,\\
0 &\text{ if } i\neq j
\end{cases}
\]
for $0\leq i,j\leq q-1$. 
Hence $\gamma_\phi(G)=q(2q)^{q-1}=2^{q-1}q^{q}$. 
On the other hand, 
\[
c_{\phi}(G)=\prod_{i=0}^{q-1}|Z|^{-1}|C_{G}(\sigma^i)|=(4q)(2q)^{q-1}=2^{q+1}q^{q}. 
\] 
Therefore, 
\begin{align}\label{eqn4-13}
\mu_\phi(G)=\frac{1}{4}. 
\end{align}

On the other hand, 
we have $\kappa_Z(G)=|Z|^{|I(G:Z)_{Z}|}=8$.
Since $G/Z\cong {\Dih}_{4q}$ for $q\geq 2$, by Corollary \ref{coro4-13},  
\[
\mu_{\1_{Z}}(G)=(8h({\Dih}_{4q}))^2. 
\]
By \eqref{eqn4-13}, we have $h(G)^2=\mu(G)=\mu_{\1_{Z}}(G)\mu_{\phi}(G)=(4h({\Dih}_{4q}))^2$ 
and hence 
\[
h({\Dih}_{8q})=4 h({\Dih}_{4q}).
\] 
Since $|{\Dih}_{8q}\rq{}|=q=2|{\Dih}_{4q}\rq{}|$, we have 
\begin{align}
\frac{h({\Dih}_{8q})}{|{\Dih}_{8q}\rq{}|^2}=\frac{h({\Dih}_{4q})}{|{\Dih}_{4q}\rq{}|^2}.  
\end{align}
Set $q=2^e q\rq{}\geq 2$ with odd $q\rq{}$ and $e\geq 0$.
Then by \eqref{eqn4-15},  
\begin{align}
\frac{h({\Dih}_{8q})}{|{\Dih}_{8q}\rq{}|^2}=\frac{h({\Dih}_{4q\rq{}})}{|{\Dih}_{4q\rq{}}\rq{}|^2} =1. 
\end{align}
Consequently, for $q\geq 2$,  
\begin{align}\label{eqn4-21}
h({\Dih}_{8q})=|{\Dih}_{8q}\rq{}|^2=4q^2.
\end{align}
In the case $q=1$, $Z=Z({\Dih}_8)$ and ${\Dih}_8/Z\cong \Z_2^2$. 
This case will be considered in Section \ref{SS4.4}, and it follows from Proposition \ref{prop3-23} with $p=s=2$ that $h({\Dih}_8)=4$. 
Hence \eqref{eqn4-21} also holds in the case $q=1$. 
\end{proof}

\subsection{Generalized quaternions}
In this section we consider the generalized quaternions:  
\[
G={\Qu}_{4n}:=\langle \sigma , \tau|\sigma^{2n}=1, \sigma^n=\tau^{2}, \sigma^\tau=\sigma^{-1}\rangle 
\]
for $n>0$. 
The element $z=\sigma^n$ generates the center $Z=Z(G)$ and $G/Z\cong {\Dih}_{2n}$. 
We also find that $G'=\langle \sigma^2\rangle$ is of order $n$. 
We shall prove the following proposition.
\begin{proposition}\label{prop4-34}
For any $n\in \Z$ with $n\geq 2$, $h({\Qu}_{4n})=n^2=|{\Qu}_{4n}'|^2$. 
\end{proposition}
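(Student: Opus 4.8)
The plan is to exploit the central subgroup $Z = Z(\Qu_{4n}) = \langle z \rangle$ of order $2$ together with the isomorphism $G/Z \cong \Dih_{2n}$, reducing the computation to the already-established dihedral values from Proposition \ref{prop4-222}. Since $|Z| = 2$ is prime, Corollary \ref{Prime} applies directly: taking the nontrivial $\phi \in \Irr(Z)$ with $\phi(z) = -1$, we have
\[
\mu(G) = 2^{2|I(G:Z)_{Z}|}\,\mu(G/Z)\,\mu_{\phi}(G),
\]
so $h(G)^2 = 2^{2|I(G:Z)_{Z}|}\,h(\Dih_{2n})^2\,\mu_{\phi}(G)$. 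The first factor requires identifying which conjugacy classes $K$ of $G$ satisfy $zK = K$; these are exactly the classes fixed under multiplication by the central involution, and I expect them to be the classes of the form $\{\sigma^i \tau, \ldots\}$ (the ``reflection-type'' classes), whose count determines $|I(G:Z)_{Z}|$. The value $h(\Dih_{2n})$ is supplied verbatim by Proposition \ref{prop4-222}.

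The main work, and the main obstacle, is the explicit computation of $\mu_{\phi}(G) = \gamma_\phi(G)/c_\phi(G)$ for the faithful character $\phi$. First I would list the conjugacy classes of $\Qu_{4n}$ and determine the basis $\CB(G)_\phi$ of $e_\phi Z(\C G)$ via Lemma \ref{lemma2-2}: the relevant classes are those $K_j$ with $\mathrm{Ann}_Z(K_j) \leq \Ker\phi = \{1\}$, i.e. the classes on which $e_\phi$ does not vanish. I would then compute the structure constants of the $e_\phi$-multiplication on these class sums, closely mirroring the dihedral calculation in \eqref{eqn4-11} but tracking the sign $\phi(z) = -1$ that appears whenever a product lands in a $z$-translated class. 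This yields the Gramian matrix $\Gamma(\CB(G)_\phi)$ through the trace pairing $\langle e_\phi[K_i] \mid e_\phi[K_j]\rangle = \tr_{e_\phi Z(\C G)} e_\phi[K_{i^*}][K_j]$, and hence $\gamma_\phi(G)$ as an absolute determinant.

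For $c_\phi(G)$ I would use its definition $c_\phi(G) = \prod_{j \in I(G:Z)_{\{1\}}} |Z|^{-1}|C_G(g_j)|$, reading off the centralizer orders from the class sizes. I anticipate that the quaternion computation will differ from the dihedral one chiefly in the behavior of the element $\tau$ (whose square is central and nontrivial, unlike in $\Dih_{2n}$), which is precisely where the faithful character $\phi$ detects the distinction between $\Qu_{4n}$ and $\Dih_{2n}$; this is the step where the extra factor beyond the dihedral value must emerge. Once $\gamma_\phi(G)$ and $c_\phi(G)$ are in hand, I would assemble the pieces through Corollary \ref{Prime}, extract $h(G)$ as the positive square root of $\mu(G)$, and finally verify $h(G) = n^2$ by comparing with $|G'| = |\langle \sigma^2 \rangle| = n$, confirming $h(G) = |G'|^2$ as claimed.
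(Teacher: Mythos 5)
Your overall strategy is the paper's: take $Z=Z(G)=\langle z\rangle$ of order $2$, use $G/Z\cong\Dih_{2n}$ and Corollary \ref{Prime} to write $\mu(G)=2^{2|I(G:Z)_Z|}\mu(\Dih_{2n})\mu_\phi(G)$, and then compute $\mu_\phi(G)$ for the faithful $\phi$ directly. But there is a concrete gap at the one point where you commit to a prediction: the identification of the $Z$-fixed conjugacy classes. This set depends on the parity of $n$, and your proposal contains no case division. For $n$ odd one has $zK_j=K_{n-j}$ for the rotation classes and $zK_{n+1}=K_{n+2}$ for the two reflection classes, so \emph{no} class is fixed, $|I(G:Z)_Z|=0$, and the power of $2$ in Corollary \ref{Prime} is trivial; your guess that the reflection-type classes are the fixed ones is exactly wrong here. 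For $n$ even the fixed classes are $K_{n+1}$, $K_{n+2}$ \emph{and} the rotation class $K_{n/2}=\{\sigma^{n/2},\sigma^{-n/2}\}$, so $|I(G:Z)_Z|=3$ and the factor is $2^6$. Getting this count wrong changes the final answer by a power of $2$, so it cannot be left as an expectation.

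The same parity split propagates into the computation of $\mu_\phi(G)$. For $n$ even the reflection classes are annihilated by $e_\phi$ (Lemma \ref{lemma2-2}(1)), so $\CB(G)_\phi$ consists only of $e_\phi[K_0],\dots,e_\phi[K_{n/2-1}]$ and the computation parallels the $\Dih_{4q}$ case, giving $\mu_\phi(G)=\tfrac14$; your heuristic that the behavior of $\tau$ is where $\phi$ detects the distinction points in the wrong direction, since the $\tau$-classes do not even appear in the basis. For $n$ odd the $\tau$-class does survive in $\CB(G)_\phi$, and one finds $e_\phi[K_{n+1}]^2=-n\sum_{j}e_\phi[K_{2j}]$, a sign different from the dihedral relation, yet the resulting Gramian matrix coincides with \eqref{eqn4-12} and $\mu_\phi(G)=\mu(\Dih_{2n})$. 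Assembling the pieces gives $\mu(G)=\mu(\Dih_{2n})^2=n^4$ for $n$ odd and $\mu(G)=2^6\bigl(\tfrac{n^2}{4}\bigr)^2\cdot\tfrac14=n^4$ for $n$ even, whence $h(G)=n^2$ in both cases. Your outline would reach this conclusion only after supplying the parity analysis it currently omits.
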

\begin{proof}
{\bf The case $n$ is odd}: 
The list of conjugacy classes are given by 
\begin{align*}
K_0&=\{1\},\quad K_{i}=\{\sigma^{i},\sigma^{2n-i}\}\quad (i=1,\cdots,n-1),\quad K_{n}=\{z\},\\
K_{n+1}&=\{\sigma^{2i}\tau|i=0,\cdots, n-1\},\quad K_{n+2}=\{\sigma^{2i+1}\tau|i=0,\cdots, n-1\}.  
\end{align*}
Since  
\[
z K_{j}=K_{n-j},\text{ for }0\leq j\leq n-1,\quad z K_{n+1}=Z_{n+2}, 
\]
we have $\Conj(G)_{\{1\}}=\Conj(G)$ and $\Conj(G)_{Z}=\emptyset$. 
In particular, we may set $I(G:Z)_{\{1\}}=\{0,2,4,\cdots, n-1, n+1\}$. 

Let $\phi\in \Irr(Z)$ be the character defined by $\phi(z)=-1$. 
By direct calculations,  
\begin{align*}
&e_\phi[K_{2i}]e_\phi [K_{2j}]\\
&=
\begin{cases}
e_\phi[K_{2j}]&\text {if $0=i\leq j\leq \frac{n-1}{2}$},\\
e_\phi[K_{2(i+j)}]+e_\phi[K_{2|i-j|}]&\text {if $0<j\neq i\leq \frac{n-1}{2}$, $2i+2j\leq n-1$},\\
e_\phi[K_{2n-2(i+j)}]+e_\phi[K_{2|i-j|}]&\text {if $0<j\neq i\leq \frac{n-1}{2}$, $2i+2j\geq n+1$},\\
e_\phi[K_{4i}]+2e_\phi[K_{0}]&\text {if $0<i=j\leq \frac{n-1}{2}$, $4i\leq n-1$},\\
e_\phi[K_{2n-4i}]+2e_\phi[K_{0}]&\text {if $0<i=j\leq \frac{n-1}{2}$, $4i\geq n+1$}, 
\end{cases}
\end{align*}
and 
\begin{align*}
e_\phi[K_{0}]e_\phi [K_{n+1}]&=e_\phi[K_{n+1}],\\
e_\phi[K_{2j}]e_\phi [K_{n+1}]&=2e_\phi[K_{n+1}]\quad (j=1,\cdots, \frac{n-1}{2}), \\
e_\phi[K_{n+1}]^2&=n e_\phi[z\langle \sigma^2\rangle]=-n\sum_{i=0}^{\frac{n-1}{2}}e_\phi[K_{2j}]. 
\end{align*}
These structure constants are similar with the ones in \eqref{eqn4-11} but not the same.
By noting $K_{i^*}=K_i$ for $i=0,\cdots, n$ and $K_{(n+1)^*}=K_{n+2}$,  direct calculation gives the same Gramian matrix in \eqref{eqn4-12} and hence  
\[
\gamma_\phi(G)=\gamma({\Dih}_{2n})=\gamma(G/Z),\quad c_\phi(G)=c(G/Z).
\] 
Therefore, we have $\mu_\phi(G)=\mu(G/Z)$. 
On the other hand, since $|I(G:Z)_{Z}|=0$, we have $\mu_{\1_Z}(G)=\mu(G/Z)$.
Therefore, we have $\mu(G)=\mu(G/Z)^2$.
Thus $h(G)=h(G/Z)^2=h({\Dih}_{2n})^2=n^2=|G\rq{}|^2$. 

{\bf The case $n$ is even}: 
In this case, the list of conjugacy classes of $G$ are given by 
\begin{align*}
K_0&=\{1\},\quad K_{i}=\{\sigma^{i},\sigma^{2n-i}\}\quad (i=1,\cdots,n-1),\quad K_{n}=\{z\},\\
K_{n+1}&=\{\sigma^{2i}\tau|i=0,\cdots, n-1\},\quad K_{n+2}=\{\sigma^{2i+1}\tau|i=0,\cdots, n-1\} 
\end{align*}
and the list is the same one in the case $n$ is odd. 
The difference is   
\[
z K_{j}=K_{j},\text{ for $j=\frac{n}{2}, n+1, n+2$}  
\]
and the situation is the same as the case ${\Dih}_{4n}$. 

We set $I(G:Z)_{\{1\}}=\{0,1,2,\cdots, \frac{n}{2}-1\}$ and $I(G:Z)_{Z}=\{\frac{n}{2},n+1,n+2\}$. 
Then 
\begin{align*}
&e_\phi[K_{i}]e_\phi[K_{j}]\\
&=
\begin{cases}
e_\phi[K_{j}]&\text {if $0=i\leq j\leq \frac{n}{2}-1$},\\
e_\phi[K_{i+j}]+e_\phi[K_{|i-j|}]&\text {if $0<j\neq i\leq \frac{n}{2}-1$, $i+j\leq  \frac{n}{2}$},\\
-e_\phi[K_{n-(i+j)}]+e_\phi[K_{|i-j|}]&\text {if $0<j\neq i\leq \frac{n}{2}-1$, $i+j> \frac{n}{2}$},\\
e_\phi[K_{2i}]+2e_\phi[K_{0}]&\text {if $0<i=j\leq \frac{n}{2}-1$, $2i\leq \frac{n}{2}$},\\
-e_\phi[K_{n-2i}]+2e_\phi[K_{0}]&\text {if $0<i=j\leq \frac{n}{2}-1$, $2i> \frac{n}{2}$}. 
\end{cases}
\end{align*}
Hence $\tr_{e_\phi Z(\C G)}e_\phi[K_0]=\frac{n}{2}$, $\tr_{e_\phi Z(\C G)}e_\phi[K_i]=0$ for $i=1,\cdots, \frac{n}{2}-1$. 
Therefore, as in the proof of Proposition \ref{prop4-222}, we obtain $\gamma_\phi(G)=\gamma_\phi({\Dih}_{2n})$.
We also easily check that $c_\phi(G)=c_\phi({\Dih}_{2n})$. 
Thus, in this case, $\mu_\phi(G)=\frac{1}{4}$. 
On the other hand, since $|I(G:Z)_{Z}|=3$, we have $\mu_{\1_Z}(G)=2^6\mu({\Dih}_{2n})=2^6\cdot \left(\frac{n^2}{4}\right)^2=4n^4$.
Therefore we have $\mu(G)=\mu_{\1_Z}(G)\mu_\phi(G)=n^4$ and $h(G)=n^2=|G'|^2$. 
\end{proof}

\subsection{Some $p$-groups}\label{SS4.4}
Let $p$ be a prime. 
Here we give calculations of $\mu(G)$ for some $p$-groups $G$. 
Since a $p$-group has a nontrivial central subgroup of order $p$, we can use the results in Section 3. 
First we give the following proposition. 
\begin{proposition}\label{prop3-23}
Let $p$ be a prime, $G$ a finite $p$-group.
Assume that there is a central subgroup $Z$ of order $p$ such that $G/Z$ is abelian. 
Then  
\begin{align}\label{eqn3-24}
h(G)=p^{\frac{(p-1)(2[s]_p-s)}{2}|Z(G)/Z|},
\end{align}
where $|G/Z(G)|=p^s$ and $[s]_p=(p^{s}-1)/(p-1)$.
\end{proposition}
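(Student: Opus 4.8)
The plan is to avoid the character degrees entirely and instead evaluate $h(G)^2=\mu(G)$ through the factorization $\mu(G)=\prod_{\phi\in\Irr(Z)}\mu_\phi(G)$ of Theorem \ref{thm3-7}, computing each factor from the multiplication structure of $Z(\C G)$. First I would record the consequences of the hypothesis. Since $G/Z$ is abelian, $G'\le Z$, and as $|Z|=p$ either $G$ is abelian (then $h(G)=1$, and the claimed exponent vanishes since $s=0$) or $G'=Z\le Z(G)$, so $G$ has nilpotency class $2$. In the latter case, for $g\in Z(G)$ the class $g^G=\{g\}$ is a singleton, while for $g\notin Z(G)$ the map $h\mapsto [g,h]$ is a homomorphism onto the nontrivial subgroup $[g,G]\le G'=Z$, forcing $[g,G]=Z$ and $g^G=gZ$ of size $p$. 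Hence the central singletons fall into $|Z(G)/Z|$ orbits under the $Z$-action on $\Conj(G)$, each with annihilator $\{1\}$, while the cosets $gZ$ with $g\notin Z(G)$ are $Z$-fixed classes with annihilator $Z$; this gives $|I(G:Z)_{\{1\}}|=|Z(G)/Z|$ and $|I(G:Z)_{Z}|=(|G|-|Z(G)|)/p$.

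The key step is the computation of $\gamma_\phi(G)$ for a nontrivial $\phi\in\Irr(Z)$. Here $\Ker\phi=\{1\}$, so by Lemma \ref{lemma2-2} the basis is $\CB(G)_\phi=\{e_\phi[g_j]\mid j\in I(G:Z)_{\{1\}}\}$ with each $g_j\in Z(G)$ a representative of a coset of $Z$, and the Gramian entries are $\langle e_\phi[g_i]\mid e_\phi[g_j]\rangle=\tr_{Z(\C G)}(e_\phi[g_i^{-1}g_j])$. I would therefore evaluate, for central $w$, the trace of multiplication by $[w]$ on $Z(\C G)$: since $[w][K]=[wK]$ permutes the basis $\CB_{\rm conj}(G)$, this trace counts the classes $K$ with $wK=K$. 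Analysing $wK=K\Leftrightarrow w\in[g,G]$ (for $g\in K$) shows the trace is $|\Conj(G)|$ if $w=1$, the number of non-central classes if $w\in Z\setminus\{1\}$, and $0$ if $w\in Z(G)\setminus Z$. Averaging over $e_\phi=|Z|^{-1}\sum_z\overline{\phi(z)}[z]$ and using $\sum_{z\neq 1}\overline{\phi(z)}=-1$ gives $\tr_{Z(\C G)}(e_\phi[1])=|\Conj(G)|/p-(|G|-|Z(G)|)/p^2=|Z(G)/Z|$, whereas $\tr_{Z(\C G)}(e_\phi[w])=0$ for every $w\in Z(G)\setminus Z$. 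Since $g_i^{-1}g_j\in Z(G)\setminus Z$ for $i\neq j$, the Gramian is the diagonal matrix $|Z(G)/Z|\cdot I_r$ with $r=|Z(G)/Z|$, so $\gamma_\phi(G)=|Z(G)/Z|^{|Z(G)/Z|}$.

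Finally I would assemble the factors. By definition $c_\phi(G)=\prod_{j\in I(G:Z)_{\{1\}}}|Z|^{-1}|C_G(g_j)|=(|G|/p)^{|Z(G)/Z|}$, since each $g_j$ is central, whence $\mu_\phi(G)=\gamma_\phi(G)/c_\phi(G)=(|Z(G)|/|G|)^{|Z(G)/Z|}=p^{-s|Z(G)/Z|}$ for every nontrivial $\phi$. For the trivial character, Corollary \ref{coro4-13} with $h(G/Z)=1$ (as $G/Z$ is abelian) and $\kappa_Z(G)=p^{|I(G:Z)_{Z}|}$ gives $\mu_{\1_Z}(G)=p^{2|I(G:Z)_{Z}|}$, and substituting $|I(G:Z)_{Z}|=(|G|-|Z(G)|)/p=(p-1)[s]_p|Z(G)/Z|$ (using $p^s-1=(p-1)[s]_p$) makes this explicit. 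Multiplying the $p-1$ equal nontrivial factors with the trivial factor via Theorem \ref{thm3-7} yields $\mu(G)=p^{(p-1)(2[s]_p-s)|Z(G)/Z|}$, and taking square roots gives the asserted value of $h(G)$; note this argument is uniform in $p$ and needs neither the restriction $p>2$ of Corollary \ref{Prime} nor the list of irreducible degrees. I expect the delicate point to be the trace computation of the second step, and in particular the vanishing of the off-diagonal Gramian entries, which is what forces $\gamma_\phi(G)$ to be a pure prime power and which rests entirely on the class structure established in the first step; the remainder is bookkeeping with exponents.
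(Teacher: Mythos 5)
Your proof is correct and follows essentially the same route as the paper: the same class analysis ($|I(G:Z)_{\{1\}}|=|Z(G)/Z|$, $|I(G:Z)_{Z}|=(|G|-|Z(G)|)/p$), the same values $\mu_\phi(G)=|G/Z(G)|^{-|Z(G)/Z|}$ for nontrivial $\phi$ and $\mu_{\1_Z}(G)=p^{2|I(G:Z)_{Z}|}$, assembled through the factorization of Theorem \ref{thm3-7}. The only local difference is that you obtain $\gamma_\phi(G)=|Z(G)/Z|^{|Z(G)/Z|}$ by a direct fixed-point/trace computation of multiplication by central elements on $\CB_{\rm conj}(G)$, whereas the paper identifies $\CB(G)_\phi$ with $\CB(Z(G))_\phi$ and quotes the abelian-group formula \eqref{eqn4-111}; both are valid.
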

\begin{remark}
This is the case $G$ is nilpotent of class $2$. 
Harada's conjecture II is true in this case due to \cite{Miyamoto}.  
\end{remark}
\begin{proof}
Since $G/Z$ is abelian, we may set $I_{G/Z}=\{1,\cdots, |G/Z|\}$. We then set 
\[
I(G:Z)_{\{1\}}=\{1,\cdots, t\},\quad I(G:Z)_Z=\{t+1,\cdots, |G/Z|\}
\]
and take $g_i\in G$ so that $g_i^G=K_i$ for $i=1,\cdots, |G/Z|$. 
We see that 
\[
K_i=\begin{cases}
\{g_i\}&\text{ for } i\in I(G:Z)_{\{1\}},\\
\{zg_i|z\in Z\}&\text{ for }i\in I(G:Z)_Z. 
\end{cases}
\]
In particular, we have 
\[
\Conj(G:Z)_{\{1\}}=\{\{g\}|g\in Z(G)\}= \Conj(Z(G):Z)_{\{1\}}
\]
and hence we may identify $I(G:Z)_{\{1\}}=I(Z(G):Z)_{\{1\}}$. 
In particular, $t=|Z(G)/Z|$. 

Let $\phi\in \Irr(Z)$ and assume that $\phi\neq \1_Z$.
Since $|Z|=p$, $\phi$ is faithful and 
\[
\CB(G)_\phi=\{e_\phi[g_i]|i\in I(G:Z)_{\{1\}}\}=\{e_\phi[g_i]|i\in I(Z(G):Z)_{\{1\}}\}=\CB(Z(G))_\phi.
\] 
Therefore, we have 
\[
\gamma_\phi(G)=\gamma_\phi(Z(G))=|Z(G)/Z|^{|Z(G)/Z|}
\]
by \eqref{eqn4-111}.  
We also have     
\[
c_\phi(G)=\prod_{i\in I(G:Z)_{\{1\}}}(|Z|^{-1}|G|)=|G/Z|^{t}=|G/Z|^{|Z(G)/Z|}.
\]
Thus 
\[
\mu_\phi(G)=|G/Z(G)|^{-|Z(G)/Z|}. 
\]

One also has $\Conj(G:Z)_Z=\{K_i|i=t+1,\cdots, |G/Z|\}$ and 
\[
|I(G:Z)_Z|=(|G|-|Z(G)|)/|Z|=(|G/Z(G)|-1)|Z(G)/Z|.
\] 
Since $\mu(G/Z)=1$, by Corollary \ref{Prime}, 
\begin{align*}
\mu(G)&=|Z|^{2|Z(G)/Z|(|G/Z(G)|-1)}\times \left(|G/Z(G)|^{-|Z(G)/Z|}\right)^{p-1}\\
 &=\left(|Z|^{2(|G/Z(G)|-1)}|G/Z(G)|^{-(p-1)}\right)^{|Z(G)/Z|}\\
 &=\left(p^{2(p^s-1)-s(p-1)}\right)^{|Z(G)/Z|}\\
 &=\left(p^{(2[s]_p-s)(p-1)}\right)^{|Z(G)/Z|}.
\end{align*}
Since $\mu(G)=h(G)^2$ we have the desired formula. 
\end{proof}

\begin{remark}
By Proposition \ref{prop3-23}, 
\begin{align*}
h(G)=\begin{cases}
2^{(2(2^s-1)-s)\frac{|Z(G)/Z|}{2}}&\text{ if }p=2,\\
p^{((p^s-1)-s(p-1)/2)|Z(G)/Z|}&\text{ if $p$ is odd prime}.
\end{cases}
\end{align*}
Therefore, if $p>2$ or $p=2$ and $Z\neq Z(G)$ then $h(G)\in \Z$.  
If $Z(G)=Z$ is of order $2$, then the commutator subgroup $G\rq{}$ is of order $1$ or $2$.
If $G\rq{}=\{1\}$ then $G$ is abelian and $s=0$. 
This implies that $h(G)=1$. 
If $G\rq{}=Z=Z(G)$ then it is an extraspecial $2$-group since Frattini subgroup is $Z$.
Then it follows from \cite[Theorem 4.18]{Suzuki2} that 
$|G|=2^{2k+1}$ for some $k\in \N$ and $s=2k$ is even.
Thus $h(G)$ is an integer.    
\end{remark}

\begin{remark}Let $G$ and $Z$ be as above.
Then $ G'\leq Z$ since $G/Z$ is abelian.   
If $|G'|\neq 1$ then $|G'|=|Z|=p$. 
In this case, since $h(G)$ is a power of $p$ with positive exponent.
Hence $|G'||h(G)$.  

\end{remark}

Let $p$ be a prime and $d$ a positive integer greater than $2$. 
Consider the group
\[
G=M(p^d):=\langle x,y|x^{p^{d-1}}=y^{p}=1, x^y=x^{1+p^{d-2}}\rangle 
\]
(see \cite[Theorem 4.1]{Suzuki2}). 
\begin{proposition}\label{prop-Mp}
$h(M(p^d))=p^{p^{d-2}(p-1)}$.
\end{proposition}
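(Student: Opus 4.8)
The plan is to apply Proposition \ref{prop3-23} to $G = M(p^d)$, so the whole argument reduces to identifying the center $Z(G)$, the commutator subgroup $G'$, and checking the two hypotheses of that proposition. First I would record the basic structure: the defining relations present $G$ as the semidirect product $\langle x\rangle \rtimes \langle y\rangle$ with $\langle x\rangle \cong \Z_{p^{d-1}}$ and $\langle y\rangle \cong \Z_p$, the action being $x \mapsto x^{1+p^{d-2}}$. To see that this genuinely has order $p^d$, one checks that this automorphism has order $p$: expanding $(1+p^{d-2})^p = 1 + p^{d-1} + \binom{p}{2}p^{2(d-2)} + \cdots$ and using $k(d-2) \ge d-1$ for $k \ge 2$ and $d \ge 3$, one gets $(1+p^{d-2})^p \equiv 1 \pmod{p^{d-1}}$, while $1 + p^{d-2} \not\equiv 1 \pmod{p^{d-1}}$, so the order is exactly $p$.

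Next I would compute the center. Writing a general element as $x^a y^b$ with $0 \le a < p^{d-1}$ and $0 \le b < p$, such an element is central iff it commutes with $x$ and with $y$. Commuting with $y$ forces $a p^{d-2} \equiv 0 \pmod{p^{d-1}}$, i.e.\ $p \mid a$; commuting with $x$ requires $(1+p^{d-2})^b \equiv 1 \pmod{p^{d-1}}$, and the same binomial expansion gives $(1+p^{d-2})^b \equiv 1 + b\,p^{d-2} \pmod{p^{d-1}}$, forcing $p \mid b$ and hence $b = 0$. Therefore $Z(G) = \langle x^p\rangle$ has order $p^{d-2}$ and $|G/Z(G)| = p^2$, so $s = 2$ in the notation of Proposition \ref{prop3-23}. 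For the commutator subgroup, $[x,y] = x^{-1}(y^{-1}xy) = x^{p^{d-2}}$, and since $G/\langle x^{p^{d-2}}\rangle$ is abelian we obtain $G' = \langle x^{p^{d-2}}\rangle$, a group of order $p$. As $x^{p^{d-2}} = (x^p)^{p^{d-3}} \in Z(G)$, the subgroup $Z := G'$ is central of order $p$, and $G/Z = G/G'$ is abelian by construction; thus both hypotheses of Proposition \ref{prop3-23} hold.

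Finally I would simply substitute $s = 2$, $[2]_p = (p^2-1)/(p-1) = p+1$, and $|Z(G)/Z| = p^{d-2}/p = p^{d-3}$ into formula \eqref{eqn3-24}. The exponent becomes $\frac{(p-1)(2(p+1)-2)}{2}\,p^{d-3} = p(p-1)\,p^{d-3} = p^{d-2}(p-1)$, giving $h(M(p^d)) = p^{p^{d-2}(p-1)}$, as claimed. The only genuinely delicate step is the center computation, which hinges on the $p$-adic estimate $(1+p^{d-2})^b \equiv 1 + b\,p^{d-2} \pmod{p^{d-1}}$ (valid precisely because $d \ge 3$); once $Z(G) = \langle x^p\rangle$ and $G' = \langle x^{p^{d-2}}\rangle$ are pinned down, everything else is direct substitution, and the small cases such as $p=2,\,d=3$ (where $G \cong \Dih_8$ and $Z = Z(G)$) are absorbed uniformly.
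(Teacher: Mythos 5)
Your proof is correct and follows exactly the paper's route: identify $Z(M(p^d))=\langle x^p\rangle$ of order $p^{d-2}$ and $Z:=G'=\langle x^{p^{d-2}}\rangle$ of order $p$, note $G/Z$ is abelian, and apply Proposition \ref{prop3-23} with $s=2$ and $|Z(G)/Z|=p^{d-3}$. You merely supply the binomial-expansion verifications of the center and commutator subgroup that the paper leaves implicit.
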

\begin{proof}
We see that $Z(G)=\langle x^{p}\rangle$ is of order $p^{d-2}$ and $Z:=\langle x^{p^{d-2}} \rangle=G\rq{}$ is of order $2$. 
We note that $G/Z$ is abelian, $|G/Z(G)|=p^2$ and that $|Z(G)/Z|=p^{d-3}$.
Hence it follows from Proposition \ref{prop3-23} for $s=2$ that $h(M(p^d))=p^{p^{d-2}(p-1)}$.
\end{proof}

Finally, we consider the semi-dihedral groups ${\SD}_{2^{n}}$ for $n\geq 2$.
The semi-dihedral group ${\rm SD}_{2^{n}}$ has following generators and presentations  
\[
G={\rm SD}_{2^n}:=\langle \sigma, \tau|\sigma^{2^{n-1}}=\tau^2=1,\quad \sigma^{\tau}=\sigma^{2^{n-2}-1}\rangle  
\]
for $n\geq 4$. 
We see that $G'=\langle \sigma^2\rangle$ of order $2^{n-2}$
\begin{proposition}\label{propsd}
For any $n\in \Z$ with $n\geq 2$, $h(\SD_{2^n}) =2^{2(n-2)}=|G'|^2$. 
\end{proposition}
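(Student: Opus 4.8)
The plan is to use the unique central subgroup $Z=Z(G)$ of order $2$ and the factorization of $\mu(G)$ over $\Irr(Z)$ from Theorem \ref{thm3-7}, reducing everything to the single nontrivial character of $Z$ and to the already-computed dihedral quotient. First I would check that $z:=\sigma^{2^{n-2}}$ generates $Z(G)$: a power $\sigma^k$ commutes with $\tau$ exactly when $(2^{n-2}-2)k\equiv 0\pmod{2^{n-1}}$, and since $2^{n-2}-2$ has $2$-adic valuation $1$ for $n\ge 4$ this forces $2^{n-2}\mid k$. Reducing the presentation modulo $z$ sends $\sigma^\tau=\sigma^{2^{n-2}-1}$ to $\bar\sigma^{\bar\tau}=\bar\sigma^{-1}$, so $G/Z\cong\Dih_{2^{n-1}}$, whose Harada number is supplied by Proposition \ref{prop4-222}.

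Next I would count orbits. Conjugation by any reflection carries $\sigma^i$ to $\sigma^{(2^{n-2}-1)i}$, which equals $\sigma^{-i}$ for $i$ even and $\sigma^{2^{n-2}-i}$ for $i$ odd; together with the two reflection classes $R_0,R_1$ (split by the parity of $i$ in $\sigma^i\tau$) this lists all conjugacy classes. I would then single out the classes $K$ with $zK=K$, i.e. ${\rm Ann}_Z(K)=Z$: these are exactly $R_0$, $R_1$, and the even pair $\{\sigma^{2^{n-3}},\sigma^{-2^{n-3}}\}$, so $|I(G:Z)_Z|=3$ and $\kappa_Z(G)=2^3=8$, while the remaining $2^{n-3}$ rotation classes represent the $Z$-orbits of size $2$, giving $|I(G:Z)_{\{1\}}|=2^{n-3}$.

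The heart of the argument is the local factor $\mu_\phi(G)$ for the nontrivial $\phi\in\Irr(Z)$ with $\phi(z)=-1$. Here $\Ker\phi=\{1\}$, so $\CB(G)_\phi=\{e_\phi[K_j]\mid j\in I(G:Z)_{\{1\}}\}$ is a basis of the $2^{n-3}$-dimensional algebra $e_\phi Z(\C G)$, and the three classes with ${\rm Ann}_Z=Z$ are annihilated by $e_\phi$ (Lemma \ref{lemma2-2}(1)). Computing the products $e_\phi[K_i]e_\phi[K_j]$ from $[\sigma^a][\sigma^b]=[\sigma^{a+b}]$ and reducing every occurrence of $z$ by $e_\phi[z]=-e_\phi$, I expect the Gramian to be diagonal, with one entry $2^{n-3}$ (from $e_\phi[K_0]=e_\phi$, the identity, of trace $\dim=2^{n-3}$) and the rest equal to $2^{n-2}$, giving $\gamma_\phi(G)=2^{n-3}(2^{n-2})^{2^{n-3}-1}$. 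Since $c_\phi(G)=\prod_{j\in I(G:Z)_{\{1\}}}|C_G(g_j)|/2=2^{n-1}(2^{n-2})^{2^{n-3}-1}$, this yields $\mu_\phi(G)=\tfrac14$ independently of $n$; for $\SD_{16}$ one checks directly that the Gramian is $\mathrm{diag}(2,4)$ with $\gamma_\phi=8$ and $c_\phi=32$.

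Finally I would assemble the pieces. By Theorem \ref{thm3-7} (equivalently Corollary \ref{Prime} with $p=2$) together with Corollary \ref{coro4-13},
\[
\mu(G)=\mu_{\1_Z}(G)\,\mu_\phi(G)=\bigl(\kappa_Z(G)\,h(G/Z)\bigr)^2\cdot\tfrac14=\bigl(8\,h(\Dih_{2^{n-1}})\bigr)^2\cdot\tfrac14 .
\]
As $2^{n-1}\equiv 0\pmod 4$ for $n\ge 4$, Proposition \ref{prop4-222} gives $h(\Dih_{2^{n-1}})=2^{2n-6}$, whence $\mu(G)=2^{4n-8}$ and $h(G)=2^{2(n-2)}=|G'|^2$; the degenerate cases $n\le 3$ collapse to abelian or already-treated groups. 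The main obstacle is the third step: tracking the sign twists introduced by $\sigma^\tau=\sigma^{2^{n-2}-1}$ (encoded by $e_\phi[z]=-e_\phi$) to confirm that, although the structure constants of $e_\phi Z(\C G)$ differ from the dihedral ones, the diagonal trace values—and hence the Gramian determinant—coincide, so that $\mu_\phi(G)=\tfrac14$.
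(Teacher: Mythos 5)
Your proposal is correct and follows essentially the same route as the paper: identify $Z=Z(G)=\langle\sigma^{2^{n-2}}\rangle$, find the three classes fixed by $z$ so that $\kappa_Z(G)=8$, compute the diagonal Gramian for the faithful $\phi$ to get $\gamma_\phi(G)=q^{q/2}/2$, $c_\phi(G)=2q^{q/2}$ and hence $\mu_\phi(G)=\tfrac14$, and then assemble $h(G)=4\,h(\Dih_{2^{n-1}})$ via Theorem \ref{thm3-7} and Corollary \ref{coro4-13}. All the numerical values match the paper's, including the sign bookkeeping coming from $e_\phi[z]=-e_\phi$.
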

\begin{proof}
We see that $Z=Z(G)$ is generated by $z:=\sigma^{2^{n-2}}$. 
Set $q=2^{n-2}$. Then  
\begin{align*}
K_0&=\{1\},\quad K_{\frac{q}{2}}=\{\sigma^{\frac{q}{2}}, \sigma^{-\frac{q}{2}}\},\quad K_{q}=\{z\} \\
K_{i}&=\{\sigma^{i},(z\sigma)^{2n-i}\} \quad (i=1,\cdots,\frac{q}{2}-1),\\
K_{i}&=zK_{i-\frac{q}{2}} \quad (i=\frac{q}{2}+1,\cdots,q-1),\\
K_{q+1}&=\{\sigma^{2i}\tau|i=0,\cdots, n-1\},\quad K_{q+2}=\{\sigma^{2i+1}\tau|i=0,\cdots, n-1\} 
\end{align*}
is the list of all conjugacy classes of ${\SD}_{2^n}$. 
We note that 
\[
\Conj(G)_{Z}=\{K_{q/2}, K_{q+1}, K_{q+2}\}
\] 
and may set $I(G:Z)_{\{1\}}=\{0,1,2,\cdots, \frac{q}{2}-1\}$ and $I(G:Z)_{Z}=\{\frac{q}{2},q+1,q+2\}$.  
By using fact
\[
(\sigma^{i})^G=z^iK_{q-i},\text{ for }\frac{q}{2}<i<q, 
\]
we obtain 
\begin{align*}
&e_\phi[K_{i}]e_\phi [K_{j}]\\
&=
\begin{cases}
e_\phi[K_{j}]&\text {if $0=i\leq j\leq \frac{q}{2}-1$},\\
e_\phi[K_{i+j}]+(-1)^{i}e_\phi[K_{i-j}]&\text {if $0<j<i\leq \frac{q}{2}$ and $i+j\leq \frac{q}{2}$},\\
(-1)^{i+j}e_\phi[K_{q-(i+j)}]+(-1)^i e_\phi[K_{i-j}]&\text {if $0<j< i\leq \frac{q}{2}-1$ and $i+j> \frac{q}{2}$},\\
e_\phi[K_{2i}]+(-1)^i2e_\phi[K_{0}]&\text {if $0<i=j\leq \frac{q}{2}-1$ and $2i\leq \frac{q}{2}$},\\
e_\phi[K_{q-2i}]+(-1)^i2e_\phi[K_{0}]&\text {if $0<i=j\leq \frac{q}{2}-1$ and $2i> \frac{q}{2}$}. 
\end{cases}
\end{align*}
These identities imply that $\tr_{e_\phi Z(\C G)}e_\phi[K_0]=\frac{q}{2}$ and $\tr_{e_\phi Z(\C G)}e_\phi[K_i]=0$ for $i=1,\cdots, \frac{q}{2}-1$. 
Therefore, we can show that $\gamma_\phi(G)=\frac{q^{\frac{q}{2}}}{2}$.
We also calculate $c_\phi(G)=2q^{\frac{q}{2}}$ and get $\mu_\phi(G)=\frac{1}{4}$. 
On the other hand, since $j(G:Z)_{Z}=3$, we have $\mu_{\1_Z}(G)=2^6\mu(G/Z)$.
Since $\SD_{2^n}/Z\cong {\Dih}_{2^{n-1}}$, we have $\mu(G)=2^4\mu({\Dih}_{2^{n-1}})$ and $h(G)=4h({\Dih}_{2^{n-1}})=4\cdot (2^{n-3})^2=4^{n-2}$. 
\end{proof}
\begin{remark}
As proved in \cite[Theorem 4.1]{Suzuki2}, a non-abelian $p$-group $G$ containing cyclic maximal subgroup is of the form $M(p^n)$ if $p$ is odd. 
If $p=2$, then $G$ is one of $M(2^n)$, the dihedral groups ${\mathrm D}_{2^n}$, generalized quaternion ${\mathrm Q}_{2^n}$ or the semi-dihedral groups ${\mathrm SD}_{2^n}$.
Hence Harada's conjecture II and Harada-Chigira's conjecture are valid for such groups.    
\end{remark}

\subsection{Central products}
Let $G_1$ and $G_2$ be finite group and $H_i\subset Z(G_i)$ a central subgroup of $G_i$ for $i=1,2$. 
Assume that $H_1\cong H_2$ and $\theta:H_1\rightarrow H_2$ is a group isomorphism. 
Then the quotient $G=(G_1\times G_2)/N$ by the subgroup $N=\{(h_1,\theta(h_1^{-1}))|h_1\in H_1\}$ of $H_1\times H_2$ is called a \textit{central product} of $G_1$ and $G_2$ with respect to $H_1$ and $H_2$ (cf. \cite[P29]{Gorenstein}, \cite[P137]{Suzuki1}).  

Let $\pi:G_1\times G_2\rightarrow G$ be the canonical projection.
We regard $G_i$ to be a subgroup of $G_1\times G_2$ in natural way,  and set $Z=\pi(H_1)=\pi(H_2)$. 
We have 
\[
G/Z\cong (G_1\times G_2)/(H_1\times H_2)\cong G_1/H_1\times G_2/H_2.
\]

For $s=1,2$, let $\Conj(G_s/H_s)=\{\ol{K}^{(s)}_i|i\in I_{G_s/H_s}\}$ and take a set of all representatives $\{K_i^{(s)}|i\in I_{G_s/H_s}\}$ of the $Z$-orbits of the $Z$-set $\Conj(G_s)$ and element$g_i^{(s)}\in K^{(s)}$.
We see that $I_{G/Z}=I_{G_1/H_1}\times I_{G_2/H_2}$ and that 
\[
|(g_i^{(1)}g_j^{(2)})^G|=|K_{i}^{(1)}||K_{j}^{(2)}|\quad\text{for } (i,j)\in I(G_1,H_1)_{\{1\}}\times I(G_2,H_2)_{\{1\}}. 
\] 

Assume that $Z$ is of prime order $p$. 
Then we see that 
\[
I(G:Z)_{\{1\}}=I(G_1;H_1)_{\{1\}}\times I(G_2:H_2)_{\{1\}}. 
\] 
For any $\phi\in \Irr(Z)$ with $\Ker \phi=\{1\}$, we have  
\[
\CB(G)_\phi=\{uv|u\in \CB(G_1)_{\phi_1}, v\in  \CB(G_2)_{\phi_2}\},
\] 
where $\phi_i$ is defined by $\phi_i(h)=\phi(\pi(h))$ for $h\in H_i$, and we identify $\C[G_i]\subset \C[G]$ for $i=1,2$. 
Since $\C[G_1]$ and $\C[G_2]$ mutually commute, we have 
\[
\gamma_\phi(G)=\gamma_{\phi_1}(G_1)^{|I(G_2,H_2)_{\{1\}}|} \gamma_{\phi_2}(G_2)^{|I(G_1,H_1)_{\{1\}}|}. 
\]
Furthermore, one has 
\[
|C_G(g^{(1)}_ig^{(2)}_j)|=\frac{|G|}{|(g^{(1)}_ig^{(2)}_j)^G|}
=\frac{|G_1||G_2|}{p|K^{(1)}_i||K^{(2)}_j|}
=p^{-1}|C_{G_1}(g^{(1)}_i)||C_{G_2}(g^{(2)}_j)|
\]
for $(i,j)\in I(G_1,H_1)_{\{1\}}\times I(G_2,H_2)_{\{1\}}$. 
Hence 
\begin{align*}
c_\phi(G)&=\prod_{\substack{i\in I(G_1,H_1)_{\{1\}}\\ j\in  I(G_2,H_2)_{\{1\}}}} 
|C_G((g_i,g_j)N)||Z|^{-1}\\
&=\prod_{\substack{i\in I(G_1,H_1)_{\{1\}}\\ j\in  I(G_2,H_2)_{\{1\}}}} 
|C_{G_1}(g_i)||C_{G_2}(g_j))|p^{-2}\\
&=\left(\prod_{i\in I(G_1,H_1)_{\{1\}}} 
|C_{G_1}(g_i)|p^{-1}\right)^{|I(G_2,H_2)_{\{1\}}|}\\
&\quad \times \left(\prod_{j\in I(G_2,H_2)_{\{1\}}}|C_{G_2}(g_j))|p^{-1}\right)^{|I(G_1,H_1)_{\{1\}}|}\\
&=c_{\phi_1}(G_1)^{|I(G_2,H_2)_{\{1\}}|}
c_{\phi_2}(G_2)^{|I(G_1,H_1)_{\{1\}}|}
\end{align*}
and thus 
\begin{align*}
\mu_\phi(G)=\mu_{\phi_1}(G_1)^{|I(G_2,H_2)_{\{1\}}|}\mu_{\phi_2}(G_2)^{|I(G_1,H_1)_{\{1\}}|}. 
\end{align*}

On the other hand, 
by Proposition \ref{prop3-3},    
\[
\mu(G/Z)=\mu\left(G_1/H_1\times   G_2/H_2 \right)=\mu(G_1/H_1)^{|I_{G_2/H_2}|}\mu(G_2/H_2)^{|I_{G_1/H_1}|}. 
\]
It follows from \eqref{eqn3-22} that  
\begin{align*}
\mu(G)&=p^{2|I(G:Z)_{Z}|}\mu(G/Z)\left(\prod_{a\in \Z_p^\times}\mu_{\phi^a}(G)\right)\\
&=p^{2|I(G:Z)_{Z}|}\mu(G_1/H_1)^{|I_{G_2/H_2}|}\prod_{a\in \Z_p^\times}\mu_{\phi_1^a}(G_1)^{|I(G_2:H_2)_{\{1\}}|}\\
&\quad \times
\mu(G_2/H_2)^{|I_{G_1/H_1}|}\prod_{a\in \Z_p^\times}\mu_{\phi_2^a}(G_2)^{|I(G_1:H_1)_{\{1\}}|}\\
&=p^{2|I(G:Z)_{Z}|}\\
&\quad \times \mu(G_1/H_1)^{|I_{G_2/H_2}|}\left(\mu(G_1)p^{-2|I(G_1:H_1)_{H_1}|}\mu(G_1/H_1)^{-1} \right)^{|I(G_2:H_2)_{\{1\}}|}\\
&\quad \times
\mu(G_2/H_2)^{|I_{G_1/H_1}|}\left(\mu(G_2)p^{-2|I(G_2:H_2)_{H_2}|}\mu(G_2/H_2)^{-1} \right)^{|I(G_1:H_1)_{\{1\}}|}\\
&=p^{2|I(G_1:H_1)_{H_1}||I(G_2:H_2)_{H_2}|}\\
&\quad \times \mu(G_1)^{|I(G_2:H_2)_{\{1\}}|}\mu(G_2)^{|I(G_1:H_1)_{\{1\}}|}\\
&\quad \times \mu(G_1/H_1)^{|I(G_2:H_2)_{H_2}|}\mu(G_2/H_2)^{|I(G_1:H_1)_{H_1}|}.
\end{align*}
Consequently, we have 
\begin{align}
\begin{split}
\label{CP003}
h(G)=&p^{|I(G_1:H_1)_{H_1}||I(G_2:H_2)_{H_2}|}\\
&\times h(G_1)^{|I(G_2:H_2)_{\{1\}}|}h(G_2)^{|I(G_1:H_1)_{\{1\}}|}\\
&\times h(G_1/H_1)^{|I(G_2:H_2)_{H_2}|}h(G_2/H_2)^{|I(G_1:H_1)_{H_1}|}. 
\end{split}
\end{align}
Now we have the following theorem. 
\begin{theorem}\label{propcross}
Let $G_1$ and $G_2$ be finite groups containing central subgroups $H_1$ and $H_2$ of prime order $p$, respectively.
Let $G$ be the central product of $G_1$ and $G_2$ with respect to 
$H_1$ and $H_2$.
Then $h(G)$ is given by \eqref{CP003}. 
In particular, if all $h(G_1)$, $h(G_2)$, $h(G_1/H_1)$ and $h(G_2/H_2)$ are integer so is $h(G)$. 
\end{theorem}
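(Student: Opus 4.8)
The plan is to compute $\mu(G) = h(G)^2$ by splitting the center $Z(\C G)$ along the characters of the prime-order subgroup $Z = \pi(H_1)$ and then reassembling via the prime-order factorization of Corollary \ref{Prime}, exactly along the lines of the computation carried out above. Since $|Z| = p$, I would first fix the combinatorial data: $\Irr(Z) = \{\1_Z\} \sqcup \{\phi^a \mid a \in \Z_p^\times\}$ for a single faithful $\phi$, together with the index-set splitting $I(G:Z)_{\{1\}} = I(G_1:H_1)_{\{1\}} \times I(G_2:H_2)_{\{1\}}$, which holds because a $Z$-orbit on $\Conj(G)$ is a singleton exactly when both constituent orbits are singletons once $|Z|$ is prime. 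These identities let me index the relevant bases of $e_\phi Z(\C G)$ by products of indices coming from $G_1$ and $G_2$.

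For a faithful $\phi$, with $\phi_i(h) = \phi(\pi(h))$, the basis $\CB(G)_\phi$ is the set of products $\{uv \mid u \in \CB(G_1)_{\phi_1},\ v \in \CB(G_2)_{\phi_2}\}$, and since $\C[G_1]$ and $\C[G_2]$ commute inside $\C[G]$ the Gramian factors as a tensor product $\Gamma(\CB(G_1)_{\phi_1}) \otimes \Gamma(\CB(G_2)_{\phi_2})$, in the same manner as Proposition \ref{prop3-3}. This expresses $\gamma_\phi(G)$ as the corresponding product of $\gamma_{\phi_1}(G_1)$ and $\gamma_{\phi_2}(G_2)$. In parallel I would prove the centralizer identity $|C_G(g^{(1)}_i g^{(2)}_j)| = p^{-1}|C_{G_1}(g^{(1)}_i)||C_{G_2}(g^{(2)}_j)|$ from $|G| = |G_1||G_2|/p$, which yields the matching product formula for $c_\phi(G)$ and hence $\mu_\phi(G) = \mu_{\phi_1}(G_1)^{|I(G_2:H_2)_{\{1\}}|}\mu_{\phi_2}(G_2)^{|I(G_1:H_1)_{\{1\}}|}$.

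For the trivial-character contribution I would use $G/Z \cong G_1/H_1 \times G_2/H_2$ together with Proposition \ref{prop3-3} to express $\mu(G/Z)$ through $\mu(G_1/H_1)$ and $\mu(G_2/H_2)$. Substituting the faithful factors and this quotient factor into \eqref{eqn3-22}, and then eliminating each $\mu(G_i/H_i)$ by a second application of \eqref{eqn3-22} to $G_i$ separately, collapses the whole expression into the single displayed formula for $\mu(G)$. Taking positive square roots, all quantities being non-negative, recovers \eqref{CP003}. The integrality claim is then immediate: every exponent $|I(\cdot)|$ is a non-negative integer and $p$ is an integer, so if $h(G_1), h(G_2), h(G_1/H_1), h(G_2/H_2)$ are all integers then $h(G)$ is a product of integer powers of integers, hence an integer.

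I expect the main obstacle to be bookkeeping rather than ideas: one must keep the four families of exponents, namely $|I(G_1:H_1)_{\{1\}}|$, $|I(G_2:H_2)_{\{1\}}|$, $|I(G_1:H_1)_{H_1}|$, and $|I(G_2:H_2)_{H_2}|$, correctly paired throughout, and verify that the quotient terms $\mu(G_i/H_i)$ cancel cleanly when \eqref{eqn3-22} is applied three times, once to $G$ and once to each $G_i$, leaving precisely the prefactor $p^{|I(G_1:H_1)_{H_1}||I(G_2:H_2)_{H_2}|}$. That cancellation, together with the factor of $p^{-1}$ in the centralizer identity, are the places where an exponent error is most likely to creep in.
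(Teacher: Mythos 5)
Your proposal follows essentially the same route as the paper: the product basis $\CB(G)_\phi=\{uv\}$ for faithful $\phi$ giving the tensor-product Gramian, the centralizer identity with the $p^{-1}$ factor for $c_\phi(G)$, Proposition \ref{prop3-3} for $\mu(G/Z)$, and three applications of \eqref{eqn3-22} to assemble $\mu(G)$ and take square roots. The only quibble is that the quotient terms $\mu(G_i/H_i)$ do not cancel completely but survive with exponents $|I(G_j:H_j)_{H_j}|$ (as reflected in \eqref{CP003}), which is exactly the bookkeeping you flagged.
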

\begin{remark}
In the case $Z=\{1\}$, a central product is a direct product. 
This is the case $p=1$, that is $H_1$ and $H_2$ are trivial, in Proposition \ref{prop3-3}. 
\end{remark}
\begin{example}
As a trivial case, for $i=1,2$, we consider a finite group $G_i=Z\times T_i$, where $Z$ is a group of prime order $p$ and $T_i$ is a group.
Let $G =(G_1\times G_2)/N$ with $H_1=H_2=Z$ and $N=(z,z^{-1})|z\in Z\}$.
Since $G\cong Z\times T_1\times T_2$ we have 
\[
h(G)=h(T_1)^{p|I_{T_2}|}h(T_2)^{p|I_{T_1}|}
\] 
by Proposition \ref{prop3-3}. 

On the other hand, we have $I(G_i, Z)_{(1)}=I_{T_i}$ and $I(G_i, Z)_{Z}=\emptyset$ for $i=1,2$. 
Thus \eqref{CP003} gives 
 \[
 h(G)=h(Z\times T_1)^{|I_{T_2}|}h(Z\times T_2)^{|I_{T_1}|}
 =h(T_1)^{p|I_{T_2}|}h(T_2)^{p|I_{T_1}|}.
 \]
\end{example}
\begin{corollary}
Let $G$, $G_1,G_2,H_1,H_2$ be as in Theorem \ref{propcross}. 
If $|G_i'||h(G_i)$ and $|(G_i/H_i)'||h(G_i/H_i)$ for $i=1,2$, then $|G'||h(G)$.
 \end{corollary}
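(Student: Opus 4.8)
The plan is to combine the explicit product formula \eqref{CP003} with two elementary facts: that $|G'|$ divides $|G_1'|\,|G_2'|$, and that each $h(G_i)$ occurs in \eqref{CP003} with a strictly positive exponent. First I would observe that the hypotheses $|G_i'| \mid h(G_i)$ and $|(G_i/H_i)'| \mid h(G_i/H_i)$ already force $h(G_i), h(G_i/H_i) \in \Z$ for $i=1,2$. Hence Theorem \ref{propcross} applies and guarantees $h(G) \in \Z$, with every factor on the right-hand side of \eqref{CP003} a positive integer.

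Next I would bound $|G'|$. Writing $\pi\colon G_1 \times G_2 \to G$ for the canonical projection, one has $G' = \pi\bigl((G_1\times G_2)'\bigr) = \pi(G_1' \times G_2')$, so $G'$ is a homomorphic image of $G_1' \times G_2'$ and therefore $|G'|$ divides $|G_1' \times G_2'| = |G_1'|\,|G_2'|$. Note that this step needs no case analysis on whether $H_i \leq G_i'$, nor the exact order of $G'$.

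Then I would exploit the exponents in \eqref{CP003}. Set $a_i = |I(G_i:H_i)_{\{1\}}|$ and $b_i = |I(G_i:H_i)_{H_i}|$, so that \eqref{CP003} reads $h(G) = p^{b_1 b_2}\, h(G_1)^{a_2} h(G_2)^{a_1}\, h(G_1/H_1)^{b_2} h(G_2/H_2)^{b_1}$. The key point is that $a_1, a_2 \geq 1$: the conjugacy class $\{1\}$ of the identity has trivial $H_i$-annihilator, so it contributes to $I(G_i:H_i)_{\{1\}}$. Consequently $|G_i'| \mid h(G_i) \mid h(G_i)^{a_{3-i}}$, and multiplying the two divisibilities gives $|G_1'|\,|G_2'| \mid h(G_1)^{a_2} h(G_2)^{a_1}$. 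Since the complementary factor $p^{b_1 b_2} h(G_1/H_1)^{b_2} h(G_2/H_2)^{b_1}$ is a positive integer by the first step, $h(G_1)^{a_2} h(G_2)^{a_1}$ divides $h(G)$. Chaining $|G'| \mid |G_1'|\,|G_2'| \mid h(G_1)^{a_2} h(G_2)^{a_1} \mid h(G)$ then yields $|G'| \mid h(G)$.

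The argument is short precisely because it never uses the exact value of $|G'|$ or of $|(G_i/H_i)'|$; the quotient hypotheses enter only to secure integrality. The one place requiring care --- and the only real content beyond bookkeeping --- is the twin check that $a_1, a_2 \geq 1$ and that the complementary factor is a genuine integer, since these are exactly what license the final divisibility chain: were some $h(G_i/H_i)$ non-integral, the factor $h(G_1)^{a_2} h(G_2)^{a_1}$ would no longer need to divide $h(G)$.
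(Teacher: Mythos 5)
Your argument is correct, and it takes a genuinely different route from the paper. The paper's proof pins down $|G'|$ exactly by a case analysis on whether $H_i\leq G_i'$ (equivalently, whether $I(G_i:H_i)_{H_i}$ is nonempty), using that $H_i$ has prime order to get $|G_i'|=p|(G_i/H_i)'|$ in the nontrivial case, and then matches the resulting expression for $|G'|$ --- namely $|G_1'||(G_2/H_2)'|$, or $|(G_1/H_1)'||G_2'|$, or $|G_1'||G_2'|$ --- against specific factors of \eqref{CP003}. You instead bypass the case analysis entirely via the crude bound $|G'|\mid |G_1'|\,|G_2'|$ (from $G'=\pi(G_1'\times G_2')$), and compensate by noting that the exponents $a_i=|I(G_i:H_i)_{\{1\}}|$ are at least $1$ (the class of the identity always has trivial annihilator) and that the remaining factor $p^{b_1b_2}h(G_1/H_1)^{b_2}h(G_2/H_2)^{b_1}$ is a positive integer under the hypotheses, so that $|G_1'||G_2'|\mid h(G_1)^{a_2}h(G_2)^{a_1}\mid h(G)$. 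Your version is shorter and more robust (it never needs the exact order of $G'$ nor the prime-order hypothesis beyond what \eqref{CP003} already assumes); the paper's version yields sharper structural information --- the precise value of $|G'|$ in each case --- which is the kind of bookkeeping one would want when aiming at the stronger Harada--Chigira divisibility $|G'|\mid h(G)$ with control over the quotient. Both proofs are complete; the two checks you flag (positivity of $a_1,a_2$ and integrality of the complementary factor) are indeed exactly where the content lies in your route.
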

\begin{proof}
Since $H_i$ is of prime order, $I(G_i,H_i)_{H_i}\neq \emptyset$ if and only if  $H_i\subset (G_i)'$, and then $|G_i'|=p|(G_i/H_i)'|$.
Hence $Z\subset (G_1\times G_2)'$ if one of $I(G_1,H_1)_{H_1}$ and  $I(G_2,H_2)_{H_2}$ are non-empty. 
In the case, 
\[
|G'|=|Z||G'/Z|=p|(G_1/H_1)'||(G_2/H_2)'|.
\]
Moreover, if $I(G_1,H_1)_{H_1}\neq \emptyset$ or $I(G_2,H_2)_{H_2}\neq \emptyset$ then 
\[
|G'|=|G_1'||(G_2/H_2)'|\quad \text{or } 
|G'|=|(G_1/H_1)'||G_2'|, 
\]  
respectively. 
By \eqref{CP003}, we have $|G'||h(G)$.  

In the case $I(G_1,H_1)_{H_1}=I(G_2,H_2)_{H_2}=\emptyset$, then $Z\cap G'=\{1\}$ and 
\[
|G'|=|(G/Z)'|=|(G_1/H_1)'||(G_2/H_2)'|=|G_1'||G_2'|. 
\]
Hence \eqref{CP003} proves $|G'||h(G)$. 
\end{proof}

\end{document}